\newcommand{\abs}[1]{\left| #1 \right|}
\newcommand{\expr}[1]{\left( #1 \right)}
\newcommand{\ceil}[1]{\left\lceil #1 \right\rceil}
\newcommand{\norm}[1]{\left\| #1 \right\|}
\newcommand{\set}[1]{\left\{ #1 \right\}}
\newcommand{\ind}{\mathbf{1}}
\newcommand{\sub}{\subseteq}
\newcommand{\R}{\mathbf{R}}
\newcommand{\Z}{\mathbf{Z}}
\newcommand{\A}{\mathcal{A}}
\newcommand{\E}{\mathcal{E}}
\newcommand{\eps}{\varepsilon}
\newcommand{\ph}{\varphi}
\newcommand{\ignore}[1]{}
\newcommand{\q}[1]{}
\newcommand{\na}{n/a}
\newcommand{\red}[1]{\textcolor{red}{#1}}
\newcommand{\blu}[1]{\textcolor{blue}{#1}}
\newcommand{\BK}{\tmark[1]}
\newcommand{\CS}{\tmark[2]}
\newcommand{\KK}{\tmark[3]}
\newcommand{\ZR}{\tmark[4]}
\newcommand{\formula}[2][nolabel]
{\ifthenelse{\equal{#1}{nolabel}}
 {\begin{align*} #2 \end{align*}}
 {\ifthenelse{\equal{#1}{}}
  {\begin{align} #2 \end{align}}
  {\begin{align} \label{#1} #2 \end{align}}
 }
}
\DeclareMathOperator{\pv}{pv}
\DeclareMathOperator{\real}{Re}
\DeclareMathOperator{\li}{Li}
\theoremstyle{plain}
\newtheorem{theorem}{Theorem}
\newtheorem*{theorem*}{Theorem}
\newtheorem{lemma}{Lemma}
\newtheorem{proposition}{Proposition}
\theoremstyle{definition}
\theoremstyle{remark}
\newtheorem*{notation}{Notation}
\title{Eigenvalues of the fractional Laplace operator in the interval}
\author{Mateusz Kwa{\'s}nicki}
\thanks{Work supported by the Polish Ministry of Science and Higher Education grant no. N~N201 373136}
\address{Institute of Mathematics \\ Polish Academy of Sciences \\ ul. {\'S}niadeckich 8, 00-976 Warszawa, Poland}
\email{m.kwasnicki@impan.pl}
\address{Institute of Mathematics and Computer Science \\ Wroc{\l}aw University of Technology \\ ul. Wybrze{\.z}e Wyspia{\'n}\-skiego 27, 50-370 Wroc{\l}aw, Poland}
\begin{document}

\sloppy

\begin{abstract}
Two-term Weyl-type asymptotic law for the eigenvalues of one-di\-men\-sional fractional Laplace operator $(-\Delta)^{\alpha/2}$ ($\alpha \in (0, 2)$) in the interval $(-1,1)$ is given: the $n$-th eigenvalue is equal to $(n \pi/2 - (2 - \alpha) \pi/8)^\alpha + O(1/n)$. Simplicity of eigenvalues is proved for $\alpha \in [1, 2)$. $L^2$ and $L^\infty$ properties of eigenfunctions are studied. We also give precise numerical bounds for the first few eigenvalues.
\end{abstract}

\maketitle

%
%

\section{Introduction and statement of the result}

Let $D = (-1, 1)$ and $\alpha \in (0, 2)$. Below we study the asymptotic behavior of the eigenvalues of the following spectral problem:
\formula[eq:problem]{
 \expr{-\frac{d^2}{d x^2}}^{\alpha / 2} \ph(x) & = \lambda \ph(x), && x \in D ,
}
where $\ph \in L^2(D)$ is extended to $\R$ by $0$ (for details, see below). It is known that there exist an infinite sequence of eigenvalues $\lambda_n$, $0 < \lambda_1 < \lambda_2 \le \lambda_3 \le ...$, and the corresponding eigenfunctions $\ph_n$ form a complete orthonormal set in $L^2(D)$. The following is the main result of this article.

\begin{theorem}
\label{th}
We have
\formula[eq:asymp]{
 \lambda_n = \expr{\frac{n \pi}{2} - \frac{(2 - \alpha) \pi}{8}}^\alpha + O\expr{\frac{1}{n}} .
}
More precisely, there are absolute constants $C\q{1}, C'\q{1}$ such that
\formula{
 \abs{\lambda_n - \expr{\frac{n \pi}{2} - \frac{(2 - \alpha) \pi}{8}}^\alpha} & \le \frac{C\q{1} (2 - \alpha)}{\sqrt{\alpha}} \, \frac{1}{n}
}
for $n \ge (C'\q{1} / \alpha)^{3 / (2\alpha)}$.
\end{theorem}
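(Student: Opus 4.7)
The plan is to reduce the spectral problem on $D = (-1,1)$ to a half-line analysis: construct explicit generalized eigenfunctions of $(-\Delta)^{\alpha/2}$ on $(0,\infty)$, then glue two such solutions (one from each endpoint) and match their asymptotics in the bulk to obtain the quantization condition.

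First, on the half-line I would construct a bounded function $F_\mu$ vanishing on $(-\infty, 0)$ and satisfying $(-\Delta)^{\alpha/2} F_\mu = \mu^\alpha F_\mu$ on $(0,\infty)$. Using the Wiener--Hopf factorization of the symbol $|\xi|^\alpha$ (equivalently, fluctuation identities for the symmetric $\alpha$-stable L\'evy process killed on exiting $(0,\infty)$), I obtain an explicit representation leading to the asymptotic expansion
\formula{
 F_\mu(x) = \sin\expr{\mu x + \frac{(2-\alpha)\pi}{8}} - G_\mu(x), \qquad x > 0,
}
where $G_\mu \ge 0$ is smooth, decays like $(\mu x)^{-\alpha}$ as $\mu x \to \infty$, and is suitably controlled near $0$. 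The phase shift $(2-\alpha)\pi/8$ here is precisely the one appearing in the theorem, and it is the core non-trivial ingredient.

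Next, for a candidate wave number $\mu$, I form the trial function $\tilde{\ph}_\mu(x) = F_\mu(1+x) \pm F_\mu(1-x)$ on $D$, extended by $0$ on $\R \setminus D$. Since each summand satisfies the eigenvalue equation on its own half-line and both half-lines cover $D$, the residual $(-\Delta)^{\alpha/2}\tilde{\ph}_\mu - \mu^\alpha \tilde{\ph}_\mu$ on $D$ reduces to the nonlocal contribution from truncating $F_\mu(1+\cdot)$ for argument $> 2$ (and symmetrically for $F_\mu(1-\cdot)$). After a change of variable this is
\formula{
 R_\mu(x) \sim \int_2^\infty F_\mu(z) \bigl[(z-1-x)^{-1-\alpha} \pm (z-1+x)^{-1-\alpha}\bigr] \, dz,
}
which, by integration by parts on the oscillatory sinusoidal part of $F_\mu$, is $O(1/\mu)$ in $L^\infty(D)$ (the $G_\mu$ contribution gives a similar or smaller term). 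Matching the two sinusoidal descriptions
\formula{
 \sin\expr{\mu(1+x) + \frac{(2-\alpha)\pi}{8}} = \pm \sin\expr{\mu(1-x) + \frac{(2-\alpha)\pi}{8}}
}
in the bulk of $D$ then forces $2\mu + (2-\alpha)\pi/4$ to be an integer multiple of $\pi$, yielding the sequence $\mu_n = n\pi/2 - (2-\alpha)\pi/8$.

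Finally, the Courant--Fischer min-max principle converts the trial-function analysis into rigorous bounds on $\lambda_n$: for the upper bound, test an orthonormalized family $\{\tilde{\ph}_{\mu_k}\}_{k \le n}$ against the Rayleigh quotient; for the lower bound, show approximate $L^2$-completeness of this family in the $n$-dimensional true spectral subspace by controlling the subspace distance through the residual, and invoke a perturbation/Temple-type estimate. This yields $|\lambda_n - \mu_n^\alpha| = O(1/n)$, which is the asserted statement. The main obstacle is tracking the dependence on $\alpha$ uniformly: the decay of $G_\mu$ near $0$ degrades as $\alpha \to 0$, the Wiener--Hopf factors become singular as $\alpha \to 2$, and the lower bound requires a quantitative spectral gap estimate; combining these to produce the explicit prefactor $C(2-\alpha)/\sqrt{\alpha}$ requires delicate bookkeeping and is, in my view, the hardest part of the argument.
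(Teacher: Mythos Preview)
Your overall plan---half-line eigenfunctions, gluing at the two endpoints, residual estimate, spectral perturbation---is the paper's strategy, but two steps as written do not go through.

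\textbf{The trial function needs a cutoff.} Taking $\tilde{\ph}_\mu(x) = F_\mu(1+x) \pm F_\mu(1-x)$ on $D$ and extending by zero does \emph{not} give a residual that is $O(1/\mu)$ in $L^\infty(D)$, nor even in $L^2(D)$ once $\alpha \ge \tfrac12$. The point is that $F_{\mu_n}(1+x)$ does not vanish at $x=1$: one has $F_{\mu_n}(2) = (-1)^{n+1}\sin\tfrac{(2-\alpha)\pi}{8} - G(2\mu_n) \ne 0$, so truncation there creates a jump. In your tail integral $\int_2^\infty F_\mu(z)(z-1-x)^{-1-\alpha}\,dz$ the contribution near $z=2$ is of order $F_\mu(2)\,(1-x)^{-\alpha}$ as $x\to 1^-$, which is not square-integrable for $\alpha\ge\tfrac12$; integration by parts only makes it worse (the boundary term at $z=2$ carries $(1-x)^{-1-\alpha}$). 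For $\alpha>1$ the jump even takes $\tilde\ph_\mu$ out of the form domain, so the Rayleigh quotient is infinite. The paper avoids this by inserting a smooth partition of unity $q(-x)+q(x)=1$, setting $\tilde\ph_n(x) = q(-x)F_{\mu_n}(1+x) + (-1)^n q(x)F_{\mu_n}(1-x)$. Near $x=1$ only $F_{\mu_n}(1-x)$ survives, and it already has the correct boundary behaviour; the residual on $(-1,0)$ then splits into $\A$ applied to a compactly supported piece built from the small $G$-terms, plus your tail integral evaluated only at points $x<0$ bounded away from its support, where it really is $O(1/\mu_n)$.

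\textbf{Index matching requires a counting argument.} Even after the residual bound, a Temple-type estimate only produces, for each large $n$, some eigenvalue $\lambda_{k(n)}$ within $O(1/n)$ of $\mu_n^\alpha$; the intervals are eventually disjoint so the $k(n)$ are distinct, but nothing yet rules out extra eigenvalues interleaved between them, which would shift all indices. Your ``approximate $L^2$-completeness in the true $n$-dimensional spectral subspace'' runs the wrong way---the residual bound expresses $\tilde\ph_n$ in terms of the $\ph_j$, not conversely---and min-max on $\{\tilde\ph_k\}_{k\le n}$ yields only the upper bound $\lambda_n \le \mu_n^\alpha + O(1/n)$. The paper closes this with a heat-trace inequality: $\sum_j e^{-\lambda_j t} \le \int_D p_t(0)\,dx$ bounds the number of eigenvalues \emph{not} of the form $\lambda_{k(n)}$ (for $n\ge L_\alpha$) by strictly less than $L_\alpha$, and the a priori bound $\lambda_n \le (n\pi/2)^\alpha$ forces these to be exactly $\lambda_1,\dots,\lambda_{L_\alpha-1}$, whence $k(n)=n$ for all $n\ge L_\alpha$.
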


The scaling property of the fractional Laplace operator $(-d^2 / d x^2)^{\frac{\alpha}{2}}$ implies that $\lambda_n(k D) = k^{-\alpha} \lambda_n(D)$. Hence, one easily finds the asymptotic formula for any interval.

By following carefully the proof, one can take e.g. $C\q{1} = 30\,000$ and $C'\q{1} = 4\,000$ above. Note that the constant in the error term $O(1/n)$ tends to zero as $\alpha$ approaches $2$, and in the limiting case $\alpha = 2$ (not considered below), we have $\lambda_n = (n \pi / 2)^2$ without an error term. A stronger version of Theorem~\ref{th} for $\alpha = 1$ was proved in~\cite{bib:kkms10}.

\bigskip

The proof of Theorem~\ref{th} is modelled after~\cite{bib:kkms10}. In Section~\ref{sec:aux}, an estimate for the fractional Laplace operator is given. The formula for the eigenfunctions on the half-line from~\cite{bib:k10} is recalled and studied in Section~\ref{sec:hl}. An approximation to eigenfunctions is given in Section~\ref{sec:approx}, Theorem~\ref{th} is proved in Section~\ref{sec:eigv}, and three further properties of eigenfunctions and eigenvalues are studied in Section~\ref{sec:eigf}. Sections~\ref{sec:approx}--\ref{sec:eigf} correspond to Sections~8--10 in~\cite{bib:kkms10}. Proposition~\ref{prop:simple} gives the simplicity of the eigenvalues when $\alpha \in (1, 2)$. The result follows relatively easily from the result for $\alpha = 1$ in~\cite{bib:kkms10}. In Propositions~\ref{prop:square} and~\ref{prop:uniform}, $L^2(D)$ and $L^\infty(D)$ bounds for eigenfunctions are given. Finally, in Section~\ref{sec:num}, numerical estimates of $\lambda_n$ in terms of eigenvalues of large dense matrices are obtained.

\bigskip

First-term Weyl-type asymptotic for $\lambda_n$ was proved by Blumenthal and Getoor in 1959~\cite{bib:bg59}. The best known general estimate for $\lambda_n$ is $\frac{1}{2} (\frac{n \pi}{2})^\alpha \le \lambda_n \le (\frac{n \pi}{2})^\alpha$ due to DeBlassie~\cite{bib:d00} and Chen and Song~\cite{bib:cs05}. The important case of $\alpha = 1$ was studied in detail by several authors, see~\cite{bib:bk04, bib:kkms10} and the references therein. It is known that $(\lambda_n)^{1/\alpha}$ is continuous and increasing in $\alpha \in (0, 2]$, see~\cite{bib:cs05, bib:cs06, bib:d00, bib:dm07}. For a discussion of related results and historical remarks, see e.g.~\cite{bib:bk04, bib:kkms10}. Theorem~\ref{th} is of interest in physics, the asymptotic formula~\eqref{eq:asymp} (without the information about the order of the error term) was supported by numerical experiments in~\cite{bib:zrk07}, and there is a considerable amount of related (mostly numerical) research in physics literature.

Noteworthy, although the values of $C\q{1}$ and $C'\q{1}$ given above are rather large, numerical evidence suggests that the error term in formula~\eqref{eq:asymp} is rather small also for small $n$ in the full range of $\alpha \in (0, 2)$, see Table~\ref{tab} and the estimates in the last section of this article. It is an interesting open problem to prove Theorem~\ref{th} with $C\q{1}$ and $C'\q{1}$ non-exploding as $\alpha$ approaches $0$. This is related to simplicity of eigenvalues $\lambda_n$, conjectured to hold for all $\alpha \in (0, 2)$, proved for $\alpha = 1$ in~\cite{bib:kkms10}, and extended to $\alpha \in [1, 2)$ in Proposition~\ref{prop:simple} in Section~\ref{sec:eigf}.

\begin{ctable}%
[label=tab,botcap,doinside={\scriptsize},caption={Comparison of the approximation $\tilde{\lambda}_n = (\frac{n \pi}{2} - \frac{(2 - \alpha) \pi}{8})^\alpha$ (roman font), and numerical approximations to $\lambda_n$ obtained using the method of~\cite{bib:zrk07} with $5000 \times 5000$ matrices (slanted font).}]%
{|l|rr|rr|rr|}{}%
{
\hline
\multicolumn{1}{|c|}{$\alpha$} & \multicolumn{2}{c|}{$\lambda_1$} & \multicolumn{2}{c|}{$\lambda_2$} & \multicolumn{2}{c|}{$\lambda_3$} \\
\hline
0.01 & 0.998 & \sl 0.997 & 1.009 & \sl 1.009 &  1.014 & \sl  1.014 \\
0.1  & 0.981 & \sl 0.973 & 1.091 & \sl 1.092 &  1.147 & \sl  1.148 \\
0.2  & 0.971 & \sl 0.957 & 1.195 & \sl 1.197 &  1.319 & \sl  1.320 \\
0.5  & 0.991 & \sl 0.970 & 1.598 & \sl 1.601 &  2.029 & \sl  2.031 \\
1    & 1.178 & \sl 1.158 & 2.749 & \sl 2.754 &  4.316 & \sl  4.320 \\
1.5  & 1.611 & \sl 1.597 & 5.055 & \sl 5.059 &  9.592 & \sl  9.597 \\
1.8  & 2.056 & \sl 2.048 & 7.500 & \sl 7.501 & 15.795 & \sl 15.801 \\
1.9  & 2.248 & \sl 2.243 & 8.594 & \sl 8.593 & 18.710 & \sl 18.718 \\
1.99 & 2.444 & \sl 2.442 & 9.733 & \sl 9.729 & 21.820 & \sl 21.829 \\
\hline
}
\end{ctable}

Motivated by the results of~\cite{bib:kkms10} and~\cite{bib:k10}, as well as by Theorem~\ref{th} above, one can conjecture asymptotic law similar to~\eqref{eq:asymp} for eigenvalues on an interval for more general operators $\A = \psi(-d^2/dx^2)$, studied in~\cite{bib:k10}. While such a result for each individual $\psi$ should present no difficulty (under some reasonable assumptions on the growth of $\psi$ at infinity), it is an interesting (and much more dificult) problem to obtain estimates uniform also in $\psi$, for a given class of $\psi$. One important example here is the family of Klein-Gordon square-root operators $\A = \sqrt{m^2 - d^2/dx^2} - m$, with mass $m$ ranging from $0$ to $\infty$. This operator is close to $\sqrt{-d^2/dx^2}$ for small $m$, but when $m$ is large, it more similar to $-d^2/dx^2$.

\bigskip

To give a formal statement of the spectral problem~\eqref{eq:problem}, we recall the definition of the one-dimensional fractional Laplace operator $\A = (-d^2/dx^2)^{\alpha/2}$. It is defined pointwise by the principal value integral, if convergent,
\formula[eq:lap]{
 \A f(x) & = c_\alpha \pv\!\!\int_{-\infty}^\infty \frac{f(x) - f(y)}{|x - y|^{1 + \alpha}} \, dy , && x \in \R ,
}
where
\formula{
 c_\alpha & = \frac{2^\alpha \Gamma(\frac{1 + \alpha}{2})}{\sqrt{\pi} \, |\Gamma(-\frac{\alpha}{2})|} \, ;
}
$\A f(x)$ is convergent if, for example, $f$ is smooth in a neighborhood of $x$ and bounded on $\R$. Note that
\formula[eq:cest]{
 \tfrac{1}{8} \alpha (2 - \alpha) \le c_\alpha & \le \tfrac{1}{2} \alpha (2 - \alpha) .
}
For $f \in C_c^\infty(\R)$, the Fourier transform of $\A f$ is equal to $|\xi|^\alpha \hat{f}(\xi)$, and $\A$ extends to an unbounded self-adjoint operator on $L^2(\R)$. We write $\A_D$ for the operator $\A$ on $D$ with zero exterior condition on $\R \setminus D$. More precisely, for $f \in C_c^\infty(D)$, $\A_D f$ is defined to be the restriction of $\A f$ to $D$. Again, $\A_D$ extends to an unbounded self-adjoint operator on $L^2(D)$.

The operator $-\A$ (on an appropriate domain) is the generator of the one-dimensional symmetric $\alpha$-stable process $X_t$, and $-\A_D$ is the generator of $X_t$ killed upon leaving the interval $D$. This probabilistic interpretation is a primary source of our motivation, but will not be exploited in the sequel.

\begin{notation}
Throughout this article, $C$ denotes an absolute constant (independent of $\alpha$). We will track the dependence of other constants employed below on $\alpha$ to catch their asymptotic behavior as $\alpha \searrow 0$ and $\alpha \nearrow 2$. For brevity, we denote $\beta = 2 - \alpha$.
\end{notation}

%
%

\section{Auxiliary estimates}
\label{sec:aux}

Define, as in~\cite{bib:kkms10}, Appendix~C, an auxiliary function:
\formula[eq:q]{
 q(x) & = \begin{cases}
 0 & \text{for } x \in (-\infty, -\tfrac{1}{3}) , \\
 \tfrac{9}{2} (x + \tfrac{1}{3})^2 & \text{for } x \in (-\tfrac{1}{3}, 0) , \\
 1 - \tfrac{9}{2} (x - \tfrac{1}{3})^2 & \text{for } x \in (0, \tfrac{1}{3}) , \\
 1 & \text{for } x \in (\tfrac{1}{3}, \infty) .
 \end{cases}
}
Note that $q$ is piecewise $C^2$, and $q(x) + q(-x) = 1$. Fix a a piecewise $C^2$ function $f$ on $\R$, and let $g(x) = q(x) f(x)$. Further, we assume that the support of $g$ is compact. Below we estimate $\A g$ on $(-1, 0)$ in a very similar way as in~\cite{bib:kkms10}.

Choose $M$ to be the supremum of $\max(|f(x)|, |f'(x)|, |f''(x)|)$ over $x \in (-\frac{1}{3}, \frac{1}{3})$. Let $I = \int_0^\infty |f(x)| dx$. Then
\formula{
 \abs{g''(x)} & \le \abs{f(x) q''(x)} + 2 \abs{f'(x) q'(x)} + \abs{f''(x) q(x)} \le C\q{2} M.
}
Suppose first that $x \in (-1, -\frac{1}{3})$. Since $g$ vanishes in $(-1, -\frac{1}{3})$, $c_\alpha^{-1} |\A g(x)|$ is bounded above by
\formula{
 \int_{-\frac{1}{3}}^\infty \frac{\abs{g(y)}}{|x - y|^{1 + \alpha}} \, dy & \le M \int_{-\frac{1}{3}}^{\frac{1}{3}} \frac{q(y)}{|x - y|^{1 + \alpha}} dy + \frac{3^{1 + \alpha}}{2^{1 + \alpha}} \int_{\frac{1}{3}}^{\infty} \abs{f(y)} dy \\
 & \le \frac{2^{1 - \alpha} 3^\alpha M_0}{2 - \alpha} + \frac{3^{1 + \alpha} I}{2^{1 + \alpha}} \le \frac{C\q{3} M}{\beta} + C\q{4} I .
}
In the second inequality we used the estimate $q(x) / |x - z|^{1 + \alpha} \le \frac{9}{2}(x + \frac{1}{3})^{1 - \alpha}$. For $x \in (-\frac{1}{3}, 0)$ the principal value integral in the definition of $\A$ can be estimated by splitting it into two parts. By Taylor's expansion of $g$, we have
\formula{
 \abs{\pv\int_{x-\frac{1}{3}}^{x+\frac{1}{3}} \frac{g(x) - g(y)}{|x - y|^{1 + \alpha}} \, dy} & \le \sup \set{\tfrac{1}{2} \abs{g''(y)} \; : \; y \in (x-\tfrac{1}{3}, x+\tfrac{1}{3})} \int_{x - \frac{1}{3}}^{x + \frac{1}{3}} \frac{(x - y)^2}{|x - y|^{1 + \alpha}} \, dy \\
 & \le \frac{\sup \set{\abs{g''(y)} \; : \; y \in (-\tfrac{1}{3}, \tfrac{1}{3})}}{3^{2 - \alpha} (2 - \alpha)} \le \frac{C\q{2} M}{\beta} \, .
}
Here for the second inequality note that $g''(y) = 0$ for $y < -\frac{1}{3}$. Furthermore,
\formula{
 & \abs{\expr{\int_{-\infty}^{x-\frac{1}{3}} + \int_{x+\frac{1}{3}}^\infty} \frac{g(x) - g(y)}{|x - y|^{1 + \alpha}} \, dy} \\
 & \le |g(x)| \expr{\int_{-\infty}^{x-\frac{1}{3}} + \int_{x+\frac{1}{3}}^\infty} \frac{1}{(x - y)^{1 + \alpha}} \, dy + 3^{1 + \alpha} \int_{x+\frac{1}{3}}^\infty |f(y)| dy \le \frac{C\q{5} M}{\alpha} + C\q{6} I .
}
We conclude that
\formula[eq:genest]{
 c_\alpha^{-1} \abs{\A g(x)} & \le \frac{C\q{7} M}{\alpha \beta} + C\q{8} I , && x \in (-1, 0) .
}

%
%

\section{Estimates for half-line}
\label{sec:hl}

The main result of~\cite{bib:k10} is the formula for generalized eigenfunctions for a class of operators on $(0, \infty)$. The case of fractional Laplace operator is studied in~\cite{bib:k10}, Example~1. In particular, the eigenfunction $F_\lambda$ of $\A_{(0, \infty)}$ corresponding to the eigenvalue $\lambda^\alpha$ ($\lambda > 0$) is shown to be $F_\lambda(x) = F(\lambda x) = \sin(\lambda x + \frac{\beta \pi}{8}) - G(\lambda x)$ (recall that $\beta = 2 - \alpha$), where $G$ is a completely monotone function. More precisely, $G$ is the Laplace transform of
\formula[eq:gamma]{
 \gamma(s) & = \frac{\sqrt{2 \alpha} \, \sin(\frac{\alpha \pi}{2})}{2 \pi} \frac{s^\alpha}{1 + s^{2 \alpha} - 2 s^\alpha \cos(\frac{\alpha \pi}{2})} \, \exp \expr{\frac{1}{\pi} \int_0^\infty \frac{1}{1 + r^2} \, \log \frac{1 - r^\alpha s^\alpha}{1 - r^2 s^2} \, dr} .
}
Furthermore, by~\cite{bib:k10}, Lemma~13, we have
\formula[eq:gsupest]{
 G(s) & \le \sin(\tfrac{\beta \pi}{8}) \le C\q{9} \beta ,
}
and
\formula[eq:gint]{
 \int_0^\infty G(s) ds & = \cos(\tfrac{\beta \pi}{8}) - \sqrt{\tfrac{\alpha}{2}} \le C\q{10} \beta .
}
Note that the exponent in~\eqref{eq:gamma} is negative. Furthermore, for $\alpha \in (0, 1]$ we have
\formula{
 1 + s^{2 \alpha} - 2 s^\alpha \cos(\tfrac{\alpha \pi}{2}) \ge (\sin(\tfrac{\alpha \pi}{2}))^2 \ge \alpha^2,
}
while for $\alpha \in (1, 2)$, the left hand side is not less than one. Hence, for all $\alpha \in (0, 2]$,
\formula{
 1 + s^{2 \alpha} - 2 s^\alpha \cos(\tfrac{\alpha \pi}{2}) \ge \min(\alpha^2, 1) \ge \tfrac{\alpha^2}{4} .
}
Finally, $\sin(\frac{\alpha \pi}{2}) \le \alpha (2 - \alpha) = \alpha \beta$. Therefore,
\formula[eq:gammaest]{
 \gamma(s) & \le \frac{2 \sqrt{2 \alpha} \, \beta}{\alpha \pi} \, s^\alpha .
}
By direct integration of the Laplace transform, we obtain that
\formula[eq:gest]{
 G(s) & \le \frac{2 \sqrt{2 \alpha} \, \beta \Gamma(1 + \alpha)}{\alpha \pi} \, s^{-1 - \alpha} \le \frac{C\q{11} \beta}{\sqrt{\alpha}} \, s^{-1 - \alpha}.
}
In a similar manner,~\eqref{eq:gammaest} gives
\formula[eq:dgest]{
 -G'(s) & \le \frac{C\q{12} \beta}{\sqrt{\alpha}} \, s^{-2 - \alpha} , & G''(s) & \le \frac{C\q{13} \beta}{\sqrt{\alpha}} \, s^{-3 - \alpha} .
}

%
%

\section{Approximation to eigenfunctions}
\label{sec:approx}

Let $n$ be a fixed positive integer and $\mu_n = \frac{n \pi}{2} - \frac{\beta \pi}{8}$. Our goal is to show that $\mu_n^\alpha$ is close to $\lambda_n$. Note that $\mu_n \ge \frac{\pi}{4}$ and $\frac{n \pi}{4} \le \mu_n \le \frac{n \pi}{2}$.

We construct approximations $\tilde{\ph}_n$ to eigenfunctions $\ph_n$ by combining shifted eigenfunctions for half-line, $F_{\mu_n}(1 + x)$ and $F_{\mu_n}(1 - x)$, and using the auxiliary function $q$ given above in~\eqref{eq:q} to join them in a sufficiently smooth way. We let
\formula[eq:phitilde]{
 \tilde{\ph}_n(x) & = q(-x) F_{\mu_n}(1 + x) + (-1)^n q(x) F_{\mu_n}(1 - x) .
}

\begin{lemma}
We have
\formula[eq:approx]{
 \| \A_D \tilde{\ph}_n - \mu_n^\alpha \tilde{\ph}_n \|_2 & \le \frac{C\q{14} \beta}{\sqrt{\alpha}} \, \frac{1}{n} \, .
}
\end{lemma}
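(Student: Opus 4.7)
My strategy is to reduce the claim to the estimate of an explicit oscillatory integral via a commutator identity, and then to exploit the oscillation of $F_{\mu_n}$ at frequency $\mu_n \sim n$ (via integration by parts) together with the smallness of the completely monotone tail function $G$ (via the bounds established in Section~\ref{sec:hl}).

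Introduce the half-line eigenfunctions $e_1(x) = F_{\mu_n}(1+x)\mathbf{1}_{x>-1}$ and $e_2(x) = F_{\mu_n}(1-x)\mathbf{1}_{x<1}$; by~\cite{bib:k10} these satisfy $\A e_j = \mu_n^\alpha e_j$ on $(-1,\infty)$ and $(-\infty,1)$ respectively, both of which contain $D$. Rewriting $q(-x) = 1 - q(x)$ in~\eqref{eq:phitilde} gives $\tilde\ph_n = (e_1 - h_1) + (-1)^n (e_2 - h_2)$ with $h_1(x) := q(x)\, e_1(x)$ and $h_2(x) := q(-x)\, e_2(x)$, so that on $D$,
\[
 \A\tilde\ph_n - \mu_n^\alpha \tilde\ph_n = -(\A h_1 - \mu_n^\alpha h_1) - (-1)^n(\A h_2 - \mu_n^\alpha h_2).
\]

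Next, adding and subtracting $q(x)\, f(y)$ inside the principal-value integrand of $\A(qf)(x)$ yields the identity
\[
 \A(qf)(x) - q(x)\, \A f(x) = c_\alpha \int_{-\infty}^\infty \frac{q(x) - q(y)}{|x-y|^{1+\alpha}}\,f(y)\,dy,
\]
whose right-hand side is absolutely convergent since $q$ is Lipschitz and $f = e_j$ is bounded. Applying this to each $h_j$ (for $h_2$, replace $q$ by $q(-\cdot)$ and use $q(-x) - q(-y) = -(q(x) - q(y))$) and combining reduces the error to a single formula
\[
 (\A\tilde\ph_n - \mu_n^\alpha \tilde\ph_n)(x) = -c_\alpha \int_{-\infty}^\infty \frac{q(x) - q(y)}{|x-y|^{1+\alpha}}\,E(y)\,dy, \qquad E(y) := e_1(y) - (-1)^n e_2(y),
\]
for $x \in D$.

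It remains to bound the displayed integral uniformly in $x \in D$. Using $F_{\mu_n}(s) = \sin(\mu_n s + \beta\pi/8) - G(\mu_n s)$ and the identity $\mu_n + \beta\pi/8 = n\pi/2$, a short trigonometric computation shows that on $(-1,1)$ the function $E$ equals a pure sinusoid of amplitude $2$ and frequency $\mu_n$ (either $\pm 2\sin(\mu_n y)$ or $\pm 2\cos(\mu_n y)$, according to the parity of $n$) minus the $G$-combination $G(\mu_n(1+y)) - (-1)^n G(\mu_n(1-y))$; for $|y| > 1$ only one $F_{\mu_n}$-tail survives. The $G$-contributions are controlled using the pointwise estimate~\eqref{eq:gest} and the integrated bound~\eqref{eq:gint} from Section~\ref{sec:hl}, combined with the prefactor $c_\alpha \le C\alpha\beta$ from~\eqref{eq:cest}; this produces the factor $\beta/\sqrt{\alpha}$. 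The purely sinusoidal contributions, both on $(-1,1)$ and in the two $F_{\mu_n}$-tails, are handled by one integration by parts in $y$, which produces the factor $1/\mu_n \sim 1/n$; the $C^1$-regularity of $q$ prevents additional interior boundary terms at $y = \pm 1/3$ and $y = 0$ from appearing. Assembling the pointwise bounds and integrating over $D$ (which has finite measure) gives~\eqref{eq:approx}.

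The main technical obstacle is the treatment of the boundary contributions at $y = \pm 1$ introduced by the indicators in $e_1$ and $e_2$: after the IBP is performed separately on $|y| < 1$ and $|y| > 1$, one picks up terms whose pointwise size is $\sim \beta/(\mu_n|x \mp 1|^{1+\alpha})$, and these are not individually in $L^2_x(D)$. The key observation needed to close the argument is that these two boundary terms (from the sinusoidal piece on $(-1,1)$ and from the $F_{\mu_n}$-tail on $|y|>1$) share the common value of $F_{\mu_n}$ at $s=2$ up to the small jump $(-1)^n F_{\mu_n}(0) = O(\beta)$, so that after cancellation what remains is $L^2_x$-bounded with the required order $\beta/(\sqrt{\alpha}\,n)$.
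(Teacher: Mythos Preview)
Your commutator reduction
\[
 (\A\tilde\ph_n - \mu_n^\alpha\tilde\ph_n)(x) = -c_\alpha \,\pv\!\int_{-\infty}^\infty \frac{q(x)-q(y)}{|x-y|^{1+\alpha}}\,E(y)\,dy
\]
is correct (a minor point: for $\alpha\ge 1$ this is only a principal value, not absolutely convergent as you claim, since $|q(x)-q(y)|/|x-y|^{1+\alpha}\sim |x-y|^{-\alpha}$ near $y=x$). The real gap is in your treatment of the ``purely sinusoidal contribution'' on $(-1,1)$.

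For $x\in(-\tfrac13,\tfrac13)$ the kernel $K(x,y)=(q(x)-q(y))/|x-y|^{1+\alpha}$ is singular at $y=x$, so a single integration by parts in $y$ does \emph{not} produce a clean factor $1/\mu_n$: differentiating $K$ worsens the singularity. A scaling computation shows what one actually gets. Substituting $y=x+u/\mu_n$ and using $q(x)-q(x+u/\mu_n)\approx -q'(x)\,u/\mu_n$ gives, for a pure sinusoid $S(y)=\sin(\mu_n y+\phi)$,
\[
 c_\alpha\,\pv\!\int \frac{q(x)-q(y)}{|x-y|^{1+\alpha}}\,S(y)\,dy \;\sim\; -2c_\alpha\,q'(x)\,\mu_n^{\alpha-1}\cos(\mu_n x+\phi)\int_0^\infty\frac{\sin u}{u^\alpha}\,du,
\]
which is of order $\mu_n^{\alpha-1}$: bounded for $\alpha=1$, growing for $\alpha>1$, and in no case $O(1/n)$.

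The mechanism you are missing is the trigonometric cancellation built into the construction. Since $\mu_n+\beta\pi/8=n\pi/2$, one has $(-1)^n\sin(\mu_n(1-y)+\tfrac{\beta\pi}{8}) = -\sin(\mu_n(1+y)+\tfrac{\beta\pi}{8})$, so in the combination $e_1+(-1)^n e_2$ the sinusoidal parts \emph{cancel identically} on $(-1,1)$. (There is a sign slip in~\eqref{eq:phitilde}; the proof in the paper in fact uses the combination with the sign producing this cancellation --- check that with the sign you copied, $\tilde\ph_1(0)=0$, which cannot approximate the positive ground state.) With the correct sign your $E$ on $(-1,1)$ reduces to $-G_{\mu_n}(1+\cdot)-(-1)^n G_{\mu_n}(1-\cdot)$, which is uniformly $O(\beta\mu_n^{-1-\alpha}/\sqrt{\alpha})$ there by~\eqref{eq:gest}, and there is no interior sinusoid to integrate by parts. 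This is exactly the paper's decomposition $\tilde\ph_n=F_{\mu_n}(1+\cdot)+g\pm h$ with $g=q\cdot(\text{$G$-terms only})$; the bound then follows from~\eqref{eq:genest} for $\A g$, the direct bound~\eqref{eq:est2} for $\mu_n^\alpha g$, and the oscillatory tail estimate~\eqref{eq:est3} for $h$ (which lives on $[1,\infty)$, safely away from $x$).
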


\begin{proof}
Note that we have
\formula{
 \tilde{\ph}_n(x) & - F_{\mu_n}(1 + x) = -(1 - q(-x)) F_{\mu_n}(1 + x) - (-1)^n q(x) F_{\mu_n}(1 - x) \\
 & = -q(x) (F_{\mu_n}(1 + x) + (-1)^n F_{\mu_n}(1 - x)) \\
 & = q(x) (G_{\mu_n}(1 + x) + (-1)^n G_{\mu_n}(1 - x)) - \sin({\mu_n} (1 + x) + \tfrac{\beta \pi}{8}) \ind_{[1, \infty)}(x) .
}
Denote $h(x) = \sin(\mu_n (1 + x) + \frac{\beta \pi}{8}) \ind_{[1, \infty)}(x)$ and $f(x) = G_{{\mu_n}}(1 + x) + (-1)^n G_{\mu_n}(1 - x)$, $g(x) = q(x) f(x)$. It follows that $\tilde{\ph}_n(x) = F_{\mu_n}(1 + x) + g(x) + h(x)$. For $x \in (-1, 0)$, we have $\A F_{\mu_n}(x) - \mu_n^\alpha F_{\mu_n}(x) = 0$ and $h(x) = 0$. Hence,
\formula[eq:decomposition]{
 |\A \tilde{\ph}_n(x) - \mu_n^\alpha \tilde{\ph}_n(x)| & \le |\A g(x)| + |\A h(x)| + |\mu_n^\alpha g(x)| , && x \in (-1, 0) .
}
We will now estimate each of the summands on the right hand side.

Using convexity of $G$, $-G'$ and $G''$, and estimates~\eqref{eq:gint}, \eqref{eq:gest} and \eqref{eq:dgest}, we obtain that
\formula{
 \sup_{x \in (-\frac{1}{3}, \frac{1}{3})} |f(x)| & \le G(\tfrac{2}{3} \mu_n) + G(\tfrac{4}{3} \mu_n) \le \frac{C\q{15} \beta}{\sqrt{\alpha}} \, \mu_n^{-1 - \alpha} , \\
 \sup_{x \in (-\frac{1}{3}, \frac{1}{3})} |f'(x)| & \le -\mu_n G'(\tfrac{2}{3} \mu_n) - \mu_n G'(\tfrac{4}{3} \mu_n) \le \frac{C\q{16} \beta}{\sqrt{\alpha}} \, \mu_n^{-1 - \alpha} , \\
 \sup_{x \in (-\frac{1}{3}, \frac{1}{3})} |f''(x)| & \le \mu_n^2 G''(\tfrac{2}{3} \mu_n) + \mu_n^2 G''(\tfrac{4}{3} \mu_n) \le \frac{C\q{17} \beta}{\sqrt{\alpha}} \, \mu_n^{-1 - \alpha} , \\
 \int_0^\infty |f(x)| dx & \le \int_0^\infty G_{\mu_n}(1 + x) dx + \int_0^1 G_{\mu_n}(1 - x) dx \\
 & \qquad = \frac{1}{\mu_n} \int_0^\infty G(y) dy \le \frac{C\q{10} \beta}{\mu_n} \, .
}
By~\eqref{eq:genest} and~\eqref{eq:cest},
\formula[eq:est1]{
 |\A g(x)| & \le \frac{C\q{18} \beta}{\sqrt{\alpha}} \, \mu_n^{-1 - \alpha} + C\q{19} \alpha \beta^2 \mu_n^{-1} , && x \in (-1, 0) .
}
Furthermore, $|g(x)| = 0$ for $x \in (-1, -\frac{1}{3})$, and
\formula[eq:est2] {
 |\mu_n^\alpha g(x)| & \le \tfrac{1}{2} \mu_n^\alpha f(x) \le \frac{C\q{20} \beta}{\sqrt{\alpha}} \, \mu_n^{-1}, && x \in (-\tfrac{1}{3}, 0) .
}
Finally, for $x < 0$ we have the following estimate for the oscillatory integral
\formula[eq:est3]{
\begin{split}
 |\A h(x)| & = c_\alpha \abs{\int_1^\infty \frac{\sin(\mu_n (1 + y) + \frac{(2 - \alpha) \pi}{8})}{|x - y|^{1 + \alpha}} \, dy} \\
 & \le c_\alpha \int_1^{1 + \pi / \mu_n} \frac{|\sin(\mu_n (1 + y) + \frac{(2 - \alpha) \pi}{8})|}{|x - 1|^{1 + \alpha}} \, dy \le \frac{c_\alpha}{(1 - x)^{1 + \alpha} \mu_n} \le 2 \alpha \beta \mu_n^{-1} \, .
\end{split}
}
Estimates~\eqref{eq:est1}--\eqref{eq:est3} applied to~\eqref{eq:decomposition} yield that
\formula[eq:est4]{
 |\A \tilde{\ph}_n(z) - \mu_n^\alpha \tilde{\ph}_n(z)| & \le \frac{C\q{21} \beta}{\sqrt{\alpha}} \, \mu_n^{-1} , && z \in (-1, 0) .
}
By symmetry,~\eqref{eq:est4} also holds for $z \in (0, 1)$. Formula~\eqref{eq:approx}, with $\A_D \tilde{\ph}_n$ understood in the pointwise sense, follows. It remains to prove that $\tilde{\ph}_n$ is in the domain of $\A_D$. To this end, we will use the notion of the Green operator $G_D = \A_D^{-1}$. The reader is referred e.g. to~\cite{bib:bkk08} for formal definition and properties of $G_D$.

Since $\A \tilde{\ph}_n$ is bounded on $D$, the function $\tilde{\ph}_n - G_D \A \tilde{\ph_n}$ is a bounded, continuous in $D$, weakly $\alpha$-harmonic function in $D = (-1, 1)$ with zero exterior condition. Such a function is necessarily zero (see~\cite{bib:bb99, bib:d65}). It follows that $\tilde{\ph}_n = G_D \A \tilde{\ph}_n$, and hence $\tilde{\ph}_n$ is in the $L^\infty(D)$ domain of $\A_D$. Since convergence in $L^\infty(D)$ is stronger than the one in $L^2(D)$, the proof is complete.
\end{proof}

\begin{lemma}
\label{lem:norm}
We have
\formula[eq:phinorm]{
 1 - \frac{C\q{22} \beta}{n} & \le \|\tilde{\ph}_n\|_2 \le 1 + \frac{C\q{22} \beta}{n} \, .
}
In particular, there is an absolute constant $K$ such that $\|\tilde{\ph}_n\|_2 \ge \frac{1}{2}$ for $n \ge K$.
\end{lemma}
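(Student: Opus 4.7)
The plan is to compute $\|\tilde{\ph}_n\|_2^2$ directly by decomposing $\tilde{\ph}_n(x) = \Sigma(x) + R(x)$, where $\Sigma$ collects the sinusoidal pieces and $R$ collects the $G$-pieces of $F_{\mu_n}(1 \pm x) = \sin(\mu_n(1\pm x) + \beta\pi/8) - G(\mu_n(1 \pm x))$. The crucial observation is that $\mu_n + \beta\pi/8 = n\pi/2$, so $\sin(\mu_n(1 \pm x) + \beta\pi/8) = \sin(n\pi/2 \pm \mu_n x)$; together with $q(-x) + q(x) = 1$ and a case distinction on the parity of $n$ (where the sign $(-1)^n$ in the definition of $\tilde{\ph}_n$ is exactly what is needed to match the trigonometric identity), the two sinusoidal contributions collapse into a single sinusoid of unit amplitude on $(-1, 1)$, while the remainder is
\formula{
 R(x) & = -q(-x) G(\mu_n(1 + x)) - (-1)^n q(x) G(\mu_n(1 - x)) .
}

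Once $\Sigma$ is reduced to $\sin(\mu_n(1+x) + \beta\pi/8)$, the computation of $\|\Sigma\|_2^2$ proceeds via $\sin^2 = (1 - \cos)/2$; the identity $4\mu_n + \beta\pi/4 = 2n\pi - \beta\pi/4$ collapses the two boundary contributions of the cosine integral to $-\sin(\beta\pi/4)$, yielding $\|\Sigma\|_2^2 = 1 + \sin(\beta\pi/4)/(2\mu_n) = 1 + O(\beta/n)$, using $\sin(\beta\pi/4) \le \beta\pi/4$ and $\mu_n \ge n\pi/4$. The remainder satisfies $|R(x)| \le G(\mu_n(1+x)) + G(\mu_n(1-x))$; substituting $y = 1 \pm x$ and $t = \mu_n y$ and applying the bounds $\|G\|_\infty \le C\beta$ from~\eqref{eq:gsupest} and $\int_0^\infty G(t)\,dt \le C\beta$ from~\eqref{eq:gint}, one obtains
\formula{
 \|R\|_1 & \le \frac{C\beta}{\mu_n} \, , & \|R\|_2^2 & \le \|G\|_\infty \|R\|_1 \le \frac{C\beta^2}{\mu_n} \, , & |\scalar{\Sigma, R}| & \le \|\Sigma\|_\infty \|R\|_1 \le \frac{C\beta}{\mu_n} \, .
}

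Combining, $\|\tilde{\ph}_n\|_2^2 = \|\Sigma\|_2^2 + 2\scalar{\Sigma, R} + \|R\|_2^2 = 1 + O(\beta/n)$, and taking square roots yields~\eqref{eq:phinorm}. The $\|\tilde{\ph}_n\|_2 \ge 1/2$ conclusion is then immediate: since $\beta \le 2$, choosing $K \ge 4 C\q{22}$ forces $C\q{22}\beta/n \le 1/2$ for $n \ge K$. The essential step is the trigonometric collapse of $\Sigma$ in the first paragraph, which depends delicately on both the phase shift $\beta\pi/8$ in $F_\lambda$ and the sign $(-1)^n$ in the definition of $\tilde{\ph}_n$, and must be verified separately for even and odd $n$; the remainder of the argument is a routine application of the pointwise and $L^1$ bounds on $G$ established in Section~\ref{sec:hl}.
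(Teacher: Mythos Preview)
Your argument is correct and is essentially the paper's own proof: both decompose $\tilde{\ph}_n$ into a sinusoidal part and a $G$-remainder, use the identity $\mu_n+\beta\pi/8=n\pi/2$ to collapse the two sines into the single function $\sin(\mu_n(1+x)+\beta\pi/8)$, integrate $\sin^2$ explicitly, and bound the cross term and the squared remainder using $\|G\|_\infty\le C\beta$ and $\int_0^\infty G\le C\beta$ from~\eqref{eq:gsupest}--\eqref{eq:gint}.

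One caveat is worth recording. When you actually carry out the parity check you defer, you will find that $\sin(\mu_n(1-x)+\beta\pi/8)=(-1)^{n+1}\sin(\mu_n(1+x)+\beta\pi/8)$, so with the sign $(-1)^n$ as literally printed in~\eqref{eq:phitilde} the two sine terms combine with the coefficient $q(-x)-q(x)$ rather than $q(-x)+q(x)=1$, and the collapse fails. The sign consistent with the first displayed identity in the proof of the preceding lemma (and with the identity for $\tilde{\ph}_n$ used in the proof of Proposition~\ref{prop:square}, and with the expected parity $\tilde{\ph}_n(-x)=(-1)^{n+1}\tilde{\ph}_n(x)$) is $-(-1)^n$; this is evidently a typo in~\eqref{eq:phitilde}, and with that correction both your argument and the paper's go through verbatim. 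Since your bounds on $R$ use only $|R(x)|\le G(\mu_n(1+x))+G(\mu_n(1-x))$, the sign discrepancy does not affect the remainder estimates.
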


\begin{proof}
First, note that by direct integration,
\formula{
 \abs{\int_{-1}^1 \expr{\expr{\sin(\mu_n (x + 1) + \tfrac{\beta \pi}{8})}^2 - \tfrac{1}{2}} dx} \le \frac{C\q{23} \beta}{\mu_n} \, .
}
Using~\eqref{eq:phitilde} and~\eqref{eq:gint}, we obtain the lower bound,
\formula{
 \|\tilde{\ph}_n\|_2^2 & \ge \int_{-1}^1 \expr{\sin(\mu_n (x + 1) + \tfrac{\beta \pi}{8})}^2 dx \\
 & \qquad - 4 \int_{-1}^1 \abs{q(-x) G_{\mu_n}(x + 1) \sin(\mu_n (x + 1) + \tfrac{\pi}{8})} dx \ge 1 - \frac{C\q{24} \beta}{\mu_n} \, .
}
In a similar manner,
\formula{
 \|\tilde{\ph}_n\|_2^2 & \le 1 + \frac{C}{\mu_n} + 4 \int_{-1}^1 (G(\mu_n (x + 1)))^2 dx \le 1 + \frac{C\q{25} \beta}{\mu_n} \, ,
}
and the lemma is proved.
\end{proof}

%
%

\section{Proof of Theorem~\ref{th}}
\label{sec:eigv}

Since $\tilde{\ph}_n \in L^2(D)$, we have $\tilde{\ph}_n = \sum_j a_j \ph_j$ for some $a_j$. Moreover, $\|\tilde{\ph}_n\|_2^2 = \sum_j a_j^2$ and $\A_D \tilde{\ph}_n = \sum_j \lambda_j a_j \ph_j$. Let $\lambda_{k(n)}$ be the eigenvalue nearest to $\mu_n^\alpha$. Then
\formula{
 \| \A_D \tilde{\ph}_n - \mu_n^\alpha \tilde{\ph}_n \|_2^2 & = \sum_{j = 1}^\infty (\lambda_j - \mu_n^\alpha)^2 a_j^2 \ge (\lambda_{k(n)} - \mu_n^\alpha)^2 \sum_{j = 1}^\infty a_j^2 = (\lambda_{k(n)} - \mu_n^\alpha)^2 \|\tilde{\ph}_n\|_2^2 .
}
By~\eqref{eq:approx} and Lemma~\ref{lem:norm}, it follows that for $n \ge K$,
\formula[eq:lambda]{
  \abs{\lambda_{k(n)} - \mu_n^\alpha} & \le \frac{C\q{26} \beta}{\sqrt{\alpha}} \, \frac{1}{n} \, .
}
This will enable us to derive a two-term asymptotic formula for $\lambda_j$.

Denote $\eps = \frac{1}{2} \, \frac{\beta \pi}{8}$. We have
\formula[eq:epsest]{
 |(\mu_n \pm \eps)^\alpha - \mu_n^\alpha| & \ge \alpha \eps \min((\mu_n - \eps)^{\alpha - 1}, (\mu_n + \eps)^{\alpha - 1}) \ge C\q{27} \alpha \eps n^{\alpha - 1} .
}
Thus if $|\lambda_{k(n)} - \mu_n^\alpha| \le C\q{27} \alpha \eps n^{\alpha - 1}$, then $\lambda_n \in ((\mu_n - \eps)^\alpha, (\mu_n + \eps)^\alpha)$. By~\eqref{eq:lambda}, this holds true if $n \ge K$ and
\formula{
 n & \ge \expr{\frac{C\q{28} \beta}{\alpha^{3/2} \eps}}^{\frac{1}{\alpha}} = (C'\q{29} \alpha^{-3/2})^{1/\alpha} .
}
Therefore, $L_\alpha = \ceil{(C\q{29} \alpha^{-3/2})^{1/\alpha}}$ (the constant here is chosen so that also $L_\alpha \ge K$) is such that for $n \ge L_\alpha$, each interval $((\mu_n - \eps)^\alpha, (\mu_n + \eps)^\alpha)$ contains an eigenvalue $\lambda_{k(n)}$. In particular $\lambda_{k(n)}$ are distinct for $n \ge L_\alpha$. We claim that there are less than $L_\alpha$ eigenvalues not included in the above class. As in~\cite{bib:kkms10}, the key step will be the trace estimate.

Let $J$ be the set of those $j > 0$ for which $j \ne k(n)$ for all $n \ge L_\alpha$. Denote by $p_t(x - y)$ and $p^D_t(x, y)$ the heat kernels for $\A$ and $\A_D$ respectively; we have $\hat{p}_t(\xi) = \exp(-t |\xi|^\alpha)$. For $t > 0$, we have (see e.g.~\cite{bib:bk09,bib:k98})
\formula{
 \sum_{j = 1}^\infty e^{-\lambda_j t} & = \int_D \sum_{j = 1}^\infty e^{-\lambda_j t} (\ph_j(x))^2 dx = \int_D p^D_t(x, x) dx \le \int_D p_t(0) dx = \frac{2}{\pi} \int_0^\infty e^{-t s^\alpha} ds .
}
In the last step, Fourier inversion formula was used. Hence,
\formula{
 \sum_{j \in J} e^{-\lambda_j t} & = \sum_{j = 1}^\infty e^{-\lambda_j t} - \sum_{n = L_\alpha}^\infty e^{-\lambda_{k(n)} t} \le \frac{2}{\pi} \expr{\int_0^\infty e^{-t s^\alpha} ds - \frac{\pi}{2} \sum_{n = L_\alpha}^\infty e^{-t (\mu_n + \eps)^\alpha}} .
}
The latter series is bounded below by the integral of $e^{-t s^\alpha}$ over $(\mu_{L_\alpha} + \eps, \infty)$. Hence,
\formula{
 \sum_{j \in J} e^{-\lambda_j t} & \le \frac{2}{\pi} \int_0^{\mu_{L_\alpha} + \eps} e^{-t s^\alpha} ds \le \frac{2}{\pi} \, (\mu_{L_\alpha} + \eps) .
}
Taking the limit as $t \searrow 0$, we obtain that
\formula{
 \# J & \le \frac{2}{\pi} \, (\mu_{L_\alpha} + \eps) = L_\alpha - \frac{\beta}{4} + \frac{2 \eps}{\pi} .
}
Since $\eps < \frac{\beta \pi}{8}$, the right hand side is less than $L_\alpha$, and the claim is proved.

By~\cite{bib:cs06, bib:d00}, we have $\lambda_n \le (n \pi/2)^\alpha$. It follows that for all $n < L_\alpha$, we have $\lambda_n < (\mu_{L_\alpha} - \eps)^\alpha$, and so $J = \{1, 2, ..., L_\alpha - 1\}$. We conclude that $k(n) = n$ for all $n \ge L_\alpha$. Theorem~\ref{th} now follows from~\eqref{eq:lambda}.

%
%

\section{Further properties of eigenvalues and eigenfunctions}
\label{sec:eigf}

Int this section three additional properties of $\ph_n$ and $\lambda_n$ are studied. This part is modelled after~\cite{bib:kkms10}, Section~10. A number of open problems is suggested at the end of the section.

\bigskip

\begin{proposition}[cf. Lemma~3 and Corollary~4 in~\cite{bib:kkms10}]
\label{prop:square}
There is a constant $C$ such that,
\formula{
 \|\tilde{\ph}_n - \ph_n\|_2 & \le \frac{C (2 - \alpha)}{n} && \text{when $\alpha \ge 1$,} \\
 \|\tilde{\ph}_n - \ph_n\|_2 & \le \frac{C (2 - \alpha)}{\alpha^{3/2} n^\alpha} && \text{when $\alpha < 1$.}
}
In particular, if $\ph^*_n(x) = \pm\cos(\mu_n x)$ for odd $n$ an $\ph^*_n(x) = \pm\sin(\mu_n x)$ for even $n$, then
\formula{
 \|\ph^*_n - \ph_n\|_2 & \le \frac{C (2 - \alpha)}{\sqrt{n}} && \text{when $\alpha \ge \frac{1}{2}$,} \\
 \|\ph^*_n - \ph_n\|_2 & \le \frac{C (2 - \alpha)}{\alpha^{3/2} n^\alpha} && \text{when $\alpha < \frac{1}{2}$.}
}
for some constant $C$.
\end{proposition}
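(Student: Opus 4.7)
The plan is to expand $\tilde{\ph}_n = \sum_j a_j \ph_j$ in the orthonormal eigenbasis of $\A_D$ and control the off-diagonal coefficients via a spectral-gap estimate. By Parseval's identity,
\[
\|\A_D\tilde{\ph}_n - \mu_n^\alpha\tilde{\ph}_n\|_2^2 = \sum_{j}(\lambda_j-\mu_n^\alpha)^2 a_j^2 \ge \delta_n^2\sum_{j\ne n}a_j^2,
\]
where $\delta_n = \min_{j\ne n}|\lambda_j-\mu_n^\alpha|$. Combined with~\eqref{eq:approx}, this reduces the first two estimates to a lower bound on $\delta_n$.

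To estimate $\delta_n$, I invoke Theorem~\ref{th}: once $j \ge L_\alpha$ one has $|\lambda_j-\mu_j^\alpha| \le C\beta/(\sqrt\alpha\,j)$, while the mean value theorem applied to $t^\alpha$, together with $\mu_{n+1}-\mu_n = \pi/2$ and $\mu_n \ge n\pi/4$, gives $|\mu_j^\alpha-\mu_n^\alpha| \ge c\alpha n^{\alpha-1}$ for every $j \ne n$. By construction of $L_\alpha$ in Section~\ref{sec:eigv}, for $n \ge L_\alpha$ the Theorem~\ref{th} error is smaller than half of $c\alpha n^{\alpha-1}$, so $\delta_n \ge c\alpha n^{\alpha-1}$. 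Inserting this into the Parseval identity and using~\eqref{eq:approx},
\[
\sum_{j\ne n}a_j^2 \le \frac{C\beta^2/(\alpha n^2)}{c^2\alpha^2 n^{2\alpha-2}} = \frac{C\beta^2}{\alpha^3 n^{2\alpha}}.
\]
Choose the sign of $\ph_n$ so that $a_n\ge 0$. From $a_n^2 = \|\tilde{\ph}_n\|_2^2 - \sum_{j\ne n}a_j^2$ and Lemma~\ref{lem:norm}, $|a_n-1| \le C\beta/n + C\beta^2/(\alpha^3 n^{2\alpha})$, so
\[
\|\tilde{\ph}_n-\ph_n\|_2^2 = (a_n-1)^2 + \sum_{j\ne n}a_j^2 \le \frac{C\beta^2}{n^2} + \frac{C\beta^2}{\alpha^3 n^{2\alpha}}.
\]
For $\alpha\ge 1$ both summands are $O(\beta^2/n^2)$, and for $\alpha<1$ the second dominates (since $n^{2-2\alpha}\ge 1 \ge \alpha^3$), giving the advertised $\beta/n$ and $\beta/(\alpha^{3/2}n^\alpha)$ bounds. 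The finitely many $n<L_\alpha$ are absorbed by taking $C$ large enough that the stated right-hand side exceeds the trivial $\|\tilde{\ph}_n\|_2+\|\ph_n\|_2\le 3$.

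For the $\ph^*_n$ bounds I use $\mu_n+\beta\pi/8 = n\pi/2$ to expand
\[
\sin(\mu_n(1\pm x)+\tfrac{\beta\pi}{8}) = \sin(\tfrac{n\pi}{2})\cos(\mu_n x) \pm \cos(\tfrac{n\pi}{2})\sin(\mu_n x).
\]
Substituting into~\eqref{eq:phitilde} and using $q(-x)+q(x)=1$, the trigonometric parts combine (with the sign choice that matches the parity of $\ph_n$) to exactly $\pm\cos(\mu_n x)$ for odd $n$ and $\pm\sin(\mu_n x)$ for even $n$, i.e.\ to $\pm\ph^*_n$. Hence
\[
|\tilde{\ph}_n(x)\mp\ph^*_n(x)| \le G(\mu_n(1+x))+G(\mu_n(1-x)),
\]
and splitting $\int_0^\infty G(s)^2\,ds$ into $(0,1)$, where $G\le C\beta$ by~\eqref{eq:gsupest}, and $(1,\infty)$, where $G(s)\le C\beta s^{-1-\alpha}/\sqrt\alpha$ by~\eqref{eq:gest}, gives $\int_0^\infty G^2 \le C\beta^2/\alpha$. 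A change of variable then yields $\|\tilde{\ph}_n-\ph^*_n\|_2\le C\beta/\sqrt{\alpha n}$. The triangle inequality with the first part, combined with the elementary facts $\sqrt{\alpha n}\ge\sqrt{n/2}$ for $\alpha\ge\tfrac12$ and $\sqrt{\alpha n}\ge\alpha^{3/2}n^\alpha$ (i.e.\ $n^{1/2-\alpha}\ge\alpha$) for $\alpha<\tfrac12$, $n\ge1$, delivers the two remaining estimates.

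The main obstacle is calibrating the spectral gap $\alpha n^{\alpha-1}$ against the Theorem~\ref{th} error $\beta/(\sqrt\alpha\,n)$: the threshold $L_\alpha$ from Section~\ref{sec:eigv} is exactly what makes the former dominate, but verifying that the bounds aggregate into the claimed uniform estimate $C\beta/(\alpha^{3/2} n^\alpha)$ (and degenerate gracefully as $\alpha\searrow 0$ and $\alpha\nearrow 2$) requires a careful comparison of error scales. A secondary technical nuisance is the sign and parity bookkeeping needed for the trigonometric parts to collapse cleanly to $\ph^*_n$.
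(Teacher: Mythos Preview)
Your argument is essentially the paper's: spectral expansion of $\tilde\ph_n$, the gap bound $\min_{j\ne n}|\lambda_j-\mu_n^\alpha|\ge c\alpha n^{\alpha-1}$ via the \eqref{eq:epsest}/Theorem~\ref{th} comparison, and Lemma~\ref{lem:norm} plus the triangle inequality (the paper routes through $\|\tilde\ph_n\|_2\,\ph_n$ using $|a_n-\|\tilde\ph_n\|_2|^2\le\|\tilde\ph_n\|_2^2-a_n^2$, you go to $\ph_n$ directly, which is equivalent). The only difference worth noting is the $\ph^*_n$ step: the paper uses the simpler and sharper $\|G\|_2^2\le\|G\|_1\|G\|_\infty\le C\beta^2$ (from~\eqref{eq:gsupest} and~\eqref{eq:gint}) instead of your split at $s=1$, obtaining $\|\tilde\ph_n-\ph^*_n\|_2\le C\beta/\sqrt n$ without the extra $1/\sqrt\alpha$ factor.
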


\begin{proof}
Fix $n \ge L_\alpha + 1$ and $\eps = \frac{1}{2} \, \frac{\beta \pi}{8}$, and write, as in the previous section, $\tilde{\ph}_n = \sum_j a_j \ph_j$. Changing the sign of $\ph_n$ if necessary, we may assume that $a_n > 0$. As in~\eqref{eq:epsest}, for $j \ne n$ we have $|\lambda_j - \mu_n^\alpha| \ge C \alpha n^{\alpha - 1}$. Hence,
\formula{
 \|\A_D \tilde{\ph}_n - \mu_n^\alpha \tilde{\ph}_n\|_2^2 & = \sum_{j = 1}^\infty (\lambda_j - \mu_n^\alpha)^2 a_j^2 \ge C (\alpha n^{\alpha - 1})^2 \sum_{j \ne n} a_j^2 .
}
By~\eqref{eq:approx}, we obtain that
\formula[eq:phiremainder]{
 \|\tilde{\ph}_n - a_n \ph_n\|_2^2 & = \sum_{j \ne n} a_j^2 \le C \expr{\frac{\beta}{\sqrt{\alpha}} \, \frac{1}{n}}^2 \frac{1}{(\alpha n^{\alpha - 1})^2} = C \expr{\frac{\beta}{\alpha^{3/2} n^\alpha}}^2 .
}
Note that
\formula{
 \norm{\tilde{\ph}_n - \|\tilde{\ph}_n\|_2 \ph_n}_2 & \le \norm{\tilde{\ph}_n - a_n \ph_n}_2 + \abs{a_n - \|\tilde{\ph}_n\|_2} .
}
But $|a_n - \|\tilde{\ph}_n\|_2|^2 \le \|\tilde{\ph}_n\|_2^2 - a_n^2 = \|\tilde{\ph}_n - a_n \ph_n\|_2^2$. Hence, using also~\eqref{eq:phinorm}, we obtain that
\formula[eq:phl2]{
 \|\tilde{\ph}_n - \|\tilde{\ph}_n\|_2 \ph_n\|_2 & \le \frac{2 C \beta}{\alpha^{3/2} n^\alpha} \, .
}
Finally, by~\eqref{eq:phiremainder},
\formula{
 \|\tilde{\ph}_n - \ph_n\|_2 & \le \|\tilde{\ph}_n - \|\tilde{\ph}_n\|_2 \ph_n\|_2 + |\|\tilde{\ph}_n\|_2 - 1| \le \frac{2 C \beta}{\alpha^{3/2} n^\alpha} + \frac{C \beta}{n} \, .
}
We have thus proved the first part of the proposition. The second statement is a simple consequence of the first one, the identity $\tilde{\ph}_n(x) = \ph^*_n(x) - G(\mu_n (1 + x)) \pm G(\mu_n (1 - x))$, and the estimate $\|G\|_2^2 \le \|G\|_1 \|G\|_\infty \le C \beta^2$.
\end{proof}

\bigskip

\begin{proposition}[cf. Corollary~5 in~\cite{bib:kkms10}]
\label{prop:uniform}
If $\alpha \ge \frac{1}{2}$, then the eigenfunctions $\ph_n(x)$ are bounded uniformly in $n \ge 1$ and $x \in D$.
\end{proposition}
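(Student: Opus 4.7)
The plan is to exploit the resolvent identity $\ph_n = \lambda_n G_D \ph_n$ together with the $L^2$ control on $\ph_n - u_n$ afforded by Proposition~\ref{prop:square}, where $u_n := \tilde{\ph}_n/\|\tilde{\ph}_n\|_2$ denotes the normalized approximation. First, $u_n$ itself is uniformly bounded in $L^\infty$: by~\eqref{eq:phitilde} and the bound $|F_{\mu_n}(s)| \le 1 + G(s) \le 1 + C\q{9}\beta$ coming from~\eqref{eq:gsupest}, $\tilde{\ph}_n$ is bounded by an absolute constant, while Lemma~\ref{lem:norm} gives $\|\tilde{\ph}_n\|_2 \ge \tfrac{1}{2}$ for $n \ge K$. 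Each of the finitely many remaining eigenfunctions ($n < K$) is bounded by standard boundary regularity for the fractional Dirichlet problem on the interval.

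The core of the argument is the split
\begin{align*}
\ph_n = \lambda_n G_D u_n + \lambda_n G_D v_n, \qquad v_n := \ph_n - u_n .
\end{align*}
For the first summand, I would combine $G_D \A_D u_n = u_n$ with $\A_D u_n = \mu_n^\alpha u_n + e_n$, where the pointwise bound $\|e_n\|_\infty \le C\beta/(\sqrt{\alpha}\,\mu_n)$ is exactly what is established (before passing to the $L^2$ norm) in the proof of~\eqref{eq:est4}. This yields $\mu_n^\alpha G_D u_n = u_n - G_D e_n$, hence
\begin{align*}
\lambda_n G_D u_n = \frac{\lambda_n}{\mu_n^\alpha}\,(u_n - G_D e_n) .
\end{align*}
The ratio $\lambda_n/\mu_n^\alpha = 1 + O(1/n)$ is bounded by~\eqref{eq:lambda}, $u_n$ is uniformly bounded, and $\|G_D e_n\|_\infty \le \|G_D \mathbf{1}\|_\infty \|e_n\|_\infty \to 0$ since $G_D \mathbf{1}$ is bounded on $D$ (explicitly, $G_D \mathbf{1}(x)$ is a multiple of $(1-x^2)^{\alpha/2}$). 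Thus $\|\lambda_n G_D u_n\|_\infty \le C$.

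For the remainder, I apply Cauchy--Schwarz:
\begin{align*}
|\lambda_n G_D v_n(x)| \le \lambda_n \, \|G_D(x,\cdot)\|_2 \, \|v_n\|_2 .
\end{align*}
The proof of Proposition~\ref{prop:square} (cf.~\eqref{eq:phl2}) yields $\|v_n\|_2 \le C\beta/(\alpha^{3/2} n^\alpha)$, while $\lambda_n \le (n\pi/2)^\alpha$, so $\lambda_n\|v_n\|_2 \le C$ uniformly in $n$. The remaining ingredient is the uniform estimate $\sup_{x \in D} \|G_D(x,\cdot)\|_2 < \infty$: using the standard pointwise bound $G_D(x,y) \le C|x-y|^{\alpha - 1}$ for $\alpha < 1$ (with $G_D$ bounded when $\alpha \ge 1$), the integral $\int_D G_D(x,y)^2\,dy$ is finite precisely when $2(\alpha - 1) > -1$, i.e., when $\alpha > \tfrac{1}{2}$.

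The main obstacle is the endpoint $\alpha = \tfrac{1}{2}$, where $\|G_D(x,\cdot)\|_2$ has a logarithmic divergence on the diagonal and the Cauchy--Schwarz step above breaks down. To handle this case, I would iterate to $\ph_n = \lambda_n^2 G_D^2 \ph_n$: the composed kernel $G_D^2$ is uniformly bounded (the Riesz composition formula shows that $|\cdot|^{-1/2} * |\cdot|^{-1/2}$ is bounded on the line), and a sharper $L^2$ bound on the component of $v_n$ orthogonal to $\ph_n$, obtained from the spectral gap $|\lambda_j - \lambda_n| \ge c\alpha n^{\alpha - 1}$ ($j \ne n$) derived in Section~\ref{sec:eigv}, compensates for the extra factor of $\lambda_n$ produced by the iteration.
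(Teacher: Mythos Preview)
Your route is genuinely different from the paper's. The paper does not use the Green operator at all; it applies the heat semigroup $P^D_t = e^{-t\A_D}$ and writes
\[
 e^{-\lambda_n t}\ph_n = P^D_t\ph_n = P^D_t\tilde\ph_n + P^D_t(\ph_n - \tilde\ph_n),
\]
bounding the first term by $\|\tilde\ph_n\|_\infty$ and the second by $\|p_t\|_2\,\|\ph_n-\tilde\ph_n\|_2$ via Cauchy--Schwarz, with $\|p_t\|_2 \sim t^{-1/(2\alpha)}$ computed from Plancherel. Choosing $t = 1/\lambda_n$ gives $t^{-1/(2\alpha)} \le C\sqrt{n}$, which multiplies against $\|\ph_n-\tilde\ph_n\|_2 \le C n^{-\alpha}\le C n^{-1/2}$ to produce a bounded quantity for every $\alpha\ge\tfrac12$, endpoint included. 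Your Green-function decomposition is a clean alternative for $\alpha>\tfrac12$, and the treatment of the ``main'' piece $\lambda_n G_D u_n$ via $\A_D u_n = \mu_n^\alpha u_n + e_n$ is nice and avoids any smoothing estimate on the kernel.

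The gap is the endpoint $\alpha=\tfrac12$. First, a minor point: $G_D^{(2)}$ is \emph{not} bounded there. In dimension one the composition $|x|^{-1/2}*|x|^{-1/2}$ is the borderline case $\alpha+\beta=d$ of the Riesz formula; on a bounded interval $\int_{-1}^{1}|x-z|^{-1/2}|z-y|^{-1/2}\,dz$ has a $\log(1/|x-y|)$ singularity on the diagonal (split the integral at $|x-z|=|x-y|$). This is not fatal by itself, since $\|G_D^{(2)}(x,\cdot)\|_2$ is still uniformly bounded. The real problem is the ``sharper $L^2$ bound'' you invoke: the spectral gap $|\lambda_j-\mu_n^\alpha|\ge c\,n^{\alpha-1}$ is \emph{exactly} what produces the estimate $\|w_n\|_2\le C n^{-\alpha}$ in~\eqref{eq:phiremainder}; there is no further improvement available from it. Iterating once gives $\lambda_n^2\,\|G_D^{(2)}(x,\cdot)\|_2\,\|w_n\|_2$, and with $\lambda_n\sim n^{1/2}$ and $\|w_n\|_2\sim n^{-1/2}$ this is of order $\sqrt{n}$, not bounded. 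The heat-kernel approach sidesteps this precisely because the parameter $t$ can be tuned so that the kernel's $L^2$ norm grows like $\sqrt{n}$ rather than being frozen, as $\|G_D(x,\cdot)\|_2$ is, at a value that blows up when $\alpha\searrow\tfrac12$.
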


\begin{proof}
Let $P_t^D = \exp(-t \A_D)$ ($t > 0$) be the heat semigroup for $\A_D$ (or transition semigroup of the symmetric $\alpha$-stable process in $D$), and let $p_t^D(x, y)$ be the corresponding heat kernel (or transition density). It is well known that $p_t^D(x, y) \le p_t(y - x)$, where $p_t(x)$ is the heat kernel for $\A$ and $\hat{p}_t(\xi) = \exp(-t |\xi|^\alpha)$; see e.g.~\cite{bib:bbkrsv09}.

By Cauchy-Schwarz inequality and Plancherel's theorem, we obtain
\formula{
 e^{-\lambda_n t} |\ph_n(x)| & \le |P^D_t(\ph_n - \tilde{\ph}_n)(x)| + |P^D_t \tilde{\ph}_n(x)| \\
 & \le \sqrt{\int_{-\infty}^\infty (p_t(x - y))^2 dy} \, \|\ph_n - \tilde{\ph}_n\|_2 + \|\tilde{\ph}_n\|_\infty \\
 & = \sqrt{\frac{1}{2 \pi} \int_{-\infty}^\infty e^{-2 t |z|^\alpha} dz} \, \|\ph_n - \tilde{\ph}_n\|_2 + \|F\|_\infty \\
 & \le 2 \sqrt{\Gamma(1 + 1/\alpha)} \, (2 t)^{-1 / (2\alpha)} \|\ph_n - \tilde{\ph}_n\|_2 + 2 ,
}
Let $t = 1 / \lambda_n$. Then $e^{-\lambda_n t} = 1 / e$ and $t^{-1 / (2 \alpha)} \le \lambda_n^{1 / (2 \alpha)} \le C \sqrt{n}$. If $n \ge L_\alpha$ and $\alpha \ge \frac{1}{2}$, then also $\|\ph_n - \tilde{\ph}_n\|_2 \le C / \sqrt{n}$, and finally $|\ph_n(x)| \le C$. Since each $\ph_n$ is in $L^\infty(D)$, the proof is complete.
\end{proof}

\begin{proposition}[cf. Theorem~6 in~\cite{bib:kkms10}]
\label{prop:simple}
If $\alpha \ge 1$, then the eigenvalues $\lambda_n$ are simple.
\end{proposition}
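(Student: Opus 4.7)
The strategy combines three pieces: the monotonicity of $\alpha \mapsto \lambda_n(\alpha)^{1/\alpha}$ on $(0,2]$ cited in the Introduction, the known value $\lambda_n(2) = (n\pi/2)^2$ at $\alpha = 2$, and the simplicity at $\alpha = 1$ proved in~\cite{bib:kkms10}. The key preliminary inequality is
\[
 \lambda_n(1) \,>\, \frac{(n-1)\pi}{2} \qquad \text{for all } n \ge 1.
\]
This is trivial for $n = 1$, and for $n \ge 2$ it follows from the refined $\alpha = 1$ asymptotic $\lambda_n(1) = n\pi/2 - \pi/8 + O(1/n)$ from~\cite{bib:kkms10} (with explicit error constant), since then $\lambda_n(1) - (n-1)\pi/2 \ge 3\pi/8 - C/n > 0$ for $n$ beyond a small threshold; the remaining finitely many small $n$ are handled by the explicit numerical eigenvalue bounds in~\cite{bib:kkms10}.

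Given the preliminary inequality, the argument is short. The eigenvalue $\lambda_1$ is automatically simple as the ground state of the irreducible positivity-preserving semigroup generated by $-\A_D$ (Krein--Rutman). For $n \ge 2$ and $\alpha \in (1, 2)$, the monotonicity of $\alpha \mapsto \lambda_k(\alpha)^{1/\alpha}$ together with the preliminary inequality give
\[
 \lambda_{n-1}(\alpha)^{1/\alpha} \,\le\, \lambda_{n-1}(2)^{1/2} \,=\, \frac{(n-1)\pi}{2} \,<\, \lambda_n(1) \,\le\, \lambda_n(\alpha)^{1/\alpha},
\]
which yields $\lambda_{n-1}(\alpha) < \lambda_n(\alpha)$, i.e., $\lambda_n(\alpha)$ is simple. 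Combining this with the case $\alpha = 1$ from~\cite{bib:kkms10} establishes simplicity for all $\alpha \in [1, 2)$.

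The only real obstacle I expect is the preliminary inequality $\lambda_n(1) > (n-1)\pi/2$. The general lower bound $\lambda_n \ge \tfrac{1}{2}(n\pi/2)^\alpha$ from the Introduction is insufficient for $n \ge 2$, so one must lean on the sharper $\alpha = 1$ analysis in~\cite{bib:kkms10}; for those small $n$ for which the explicit error constant there still leaves the inequality open, the explicit numerical estimates (such as those developed in Section~\ref{sec:num} of the present paper) close the finitely many remaining cases.
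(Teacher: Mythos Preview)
Your proposal is correct and follows essentially the same route as the paper: sandwich $(\lambda_{n-1}(\alpha))^{1/\alpha}$ and $(\lambda_n(\alpha))^{1/\alpha}$ using monotonicity in $\alpha$, the $\alpha=2$ value $(n-1)\pi/2$, and the $\alpha=1$ lower bound $\lambda_n(1)$. The paper makes your preliminary inequality $\lambda_n(1) > (n-1)\pi/2$ explicit via Theorem~6 of~\cite{bib:kkms10} (giving $\lambda_{m,1} > m\pi/2 - \pi/8 - \pi/10$ for $m \ge 4$) together with the bounds $\lambda_{2,1} \ge 2 > \pi/2$ and $\lambda_{3,1} > 3.83 > \pi$ from~\cite{bib:bk04}; note that your Krein--Rutman step is redundant, since the $n=2$ instance of your chain already yields $\lambda_1(\alpha) < \lambda_2(\alpha)$.
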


\begin{proof}
Let us write $\lambda_{n,\alpha}$ for $\lambda_n$ in this proof. Since $(\lambda_{n,\alpha})^{1/\alpha}$ is increasing in $\alpha$, we have
\formula{
 (\lambda_{n,\alpha})^{1/\alpha} & \le (\lambda_{n,2})^{1/2} = \frac{n \pi}{2} \, .
}
By Theorem~6 in~\cite{bib:kkms10}, for $n \ge 3$ we have
\formula{
 \frac{(n+1) \pi}{2} - \frac{\pi}{8} - \frac{\pi}{10} & < \lambda_{n+1,1} \le (\lambda_{n+1,\alpha})^{1/\alpha} .
}
Therefore, $\lambda_{n,\alpha} < \lambda_{n+1,\alpha}$, except perhaps $n = 1$ or $n = 2$. But a similar argument works also for $n = 1$ and $n = 2$, since by~\cite{bib:bk04} we have
\formula{
 \frac{\pi}{2} & < 2 \le \lambda_{2,1} , && \text{and} & \pi & < 3.83 < \lambda_{3,1} .
}
The proof is complete.
\end{proof}

Numerical experiments suggest that $\ph_n$ are uniformly bounded also for $\alpha < \frac{1}{2}$. Furthermore, it would be interesting to obtain an upper estimate of $\sup_n \|\ph_n\|_\infty$, and in particular, to find its behavior when $\alpha$ approaches $0$. Finally, as stated in the introduction, better bounds for $\lambda_n$ may yield simplicity of eigenvalues also when $\alpha < 1$. 

%
%

\section{Numerical bounds for eigenvalues}
\label{sec:num}

No general \emph{efficient} algorithm giving mathematically correct numerical bounds for $\lambda_n$ is known to the author. For $\alpha = 1$, a satisfactory method (an application of Rayleigh-Ritz and Weinstein-Aronszajn methods) is described in~\cite{bib:kkms10}. For general $\alpha$, even approximation of $\lambda_n$ is difficult: all known methods converge rather slowly, and thus the computation of eigenvalues of very large matrices is required. In this section a method for obtaining a lower bound for $\lambda_n$ is described. It shares the main drawbacks of many related algorithms: compared to the technique applied in~\cite{bib:zrk07}, it converges slowly, and it suffers large errors as $\alpha$ approaches $2$. On the other hand, the method presented below gives mathematically correct lower bounds, and there is no error estimate for the numerical scheme of~\cite{bib:zrk07}. At the end of the section, a somewhat similar method for the upper bound for $\lambda_1$ is given. It gives satisfactory results for large $\alpha$, but deteriorates as $\alpha$ gets close to $0$.

\begin{ctable}%
[label={tab:interval},botcap,notespar,doinside={\scriptsize},caption={Comparison of bounds and approximations to $\lambda_n$. Each cell contains six numbers: lower bound $\lambda_{n,\eps}$ with $\eps = \frac{1}{2500}$, the best lower bound known before, approximation $(\frac{n \pi}{2} - \frac{(2 - \alpha)\pi}{8})^\alpha$, numerical approximation of~\cite{bib:zrk07}, upper bound $\lambda_{1,\eps}^*$, the best upper bound known before. The better estimates are printed in color.}]%
{|@{\hspace{0.25em}}c@{\hspace{0.25em}}*{10}{|@{\hspace{0.25em}}r@{}l@{\hspace{0.25em}}}@{\hspace{0.25em}}|}%
{
\tnote[1]{See~\cite{bib:bk04}.}
\tnote[2]{See~\cite{bib:cs05}.}
\tnote[3]{Combination of~\cite{bib:kkms10} with monotonicity in $\alpha$.}
\tnote[4]{See~\cite{bib:zrk07}.}
}%
{
\hline
\multicolumn{1}{|c|@{\hspace{0.25em}}}{$\alpha$} &
\multicolumn{2}{c|@{\hspace{0.25em}}}{$\lambda_1$} & 
\multicolumn{2}{c|@{\hspace{0.25em}}}{$\lambda_2$} & 
\multicolumn{2}{c|@{\hspace{0.25em}}}{$\lambda_3$} & 
\multicolumn{2}{c|@{\hspace{0.25em}}}{$\lambda_4$} & 
\multicolumn{2}{c|@{\hspace{0.25em}}}{$\lambda_5$} & 
\multicolumn{2}{c|@{\hspace{0.25em}}}{$\lambda_6$} & 
\multicolumn{2}{c|@{\hspace{0.25em}}}{$\lambda_7$} & 
\multicolumn{2}{c|@{\hspace{0.25em}}}{$\lambda_8$} & 
\multicolumn{2}{c|@{\hspace{0.25em}}}{$\lambda_9$} & 
\multicolumn{2}{c|}{$\lambda_{10}$} \\ 
\hline
\multirow{6}{*}{0.01} &     \blu{0.9966} &     &     \blu{1.0086} &     &     \blu{ 1.0137} &     &     \blu{ 1.0171} &     &     \blu{ 1.0196} &     &     \blu{ 1.0217} &     &     \blu{  1.0234} &     &     \blu{  1.0248} &     &     \blu{  1.0261} &     &     \blu{  1.0273} &     \\
                      & \sl      0.9943  & \BK & \sl      0.5057  & \CS & \sl       0.5078  & \CS & \sl       0.5092  & \CS & \sl       0.5104  & \CS & \sl       0.5113  & \CS & \sl        0.5121  & \CS & \sl        0.5128  & \CS & \sl        0.5134  & \CS & \sl        0.5139  & \CS \\
                      &          0.9976  &     &          1.0086  &     &           1.0138  &     &           1.0172  &     &           1.0198  &     &           1.0218  &     &            1.0235  &     &            1.0250  &     &            1.0263  &     &            1.0274  &     \\
                      & \sl      0.9966  & \ZR & \sl      1.0087  & \ZR & \sl       1.0137  & \ZR & \sl       1.0172  & \ZR & \sl       1.0197  & \ZR & \sl       1.0218  & \ZR & \sl        1.0235  & \ZR & \sl        1.0250  & \ZR & \sl        1.0263  & \ZR & \sl        1.0274  & \ZR \\
                      &         13.5210  &     & \na              &     & \na               &     & \na               &     & \na               &     & \na               &     & \na                &     & \na                &     & \na                &     & \na                &     \\
                      & \sl \red{0.9974} & \BK & \sl \red{1.0102} & \KK & \sl \red{ 1.0148} & \KK & \sl \red{ 1.0179} & \KK & \sl \red{ 1.0203} & \KK & \sl \red{ 1.0223} & \KK & \sl \red{  1.0239} & \KK & \sl \red{  1.0254} & \KK & \sl \red{  1.0266} & \KK & \sl \red{  1.0277} & \KK \\ \hline
\multirow{6}{*}{0.1}  &     \blu{0.9724} &     &     \blu{1.0919} &     &     \blu{ 1.1469} &     &     \blu{ 1.1863} &     &     \blu{ 1.2159} &     &     \blu{ 1.2405} &     &     \blu{  1.2611} &     &     \blu{  1.2791} &     &     \blu{  1.2950} &     &     \blu{  1.3094} &     \\
                      & \sl      0.9513  & \BK & \sl      0.5606  & \CS & \sl       0.5838  & \CS & \sl       0.6008  & \CS & \sl       0.6144  & \CS & \sl       0.6257  & \CS & \sl        0.6354  & \CS & \sl        0.6440  & \CS & \sl        0.6516  & \CS & \sl        0.6585  & \CS \\
                      &          0.9809  &     &          1.0913  &     &           1.1477  &     &           1.1867  &     &           1.2167  &     &           1.2412  &     &            1.2620  &     &            1.2802  &     &            1.2962  &     &            1.3107  &     \\
                      & \sl      0.9726  & \ZR & \sl      1.0922  & \ZR & \sl       1.1473  & \ZR & \sl       1.1868  & \ZR & \sl       1.2165  & \ZR & \sl       1.2413  & \ZR & \sl        1.2620  & \ZR & \sl        1.2802  & \ZR & \sl        1.2962  & \ZR & \sl        1.3107  & \ZR \\
                      &          1.8351  &     & \na              &     & \na               &     & \na               &     & \na               &     & \na               &     & \na                &     & \na                &     & \na                &     & \na                &     \\
                      & \sl \red{0.9786} & \BK & \sl \red{1.1067} & \KK & \sl \red{ 1.1575} & \KK & \sl \red{ 1.1941} & \KK & \sl \red{ 1.2226} & \KK & \sl \red{ 1.2462} & \KK & \sl \red{  1.2664} & \KK & \sl \red{  1.2840} & \KK & \sl \red{  1.2997} & \KK & \sl \red{  1.3138} & \KK \\ \hline
\multirow{6}{*}{0.2}  &     \blu{0.9572} &     &     \blu{1.1960} &     &     \blu{ 1.3182} &     &     \blu{ 1.4093} &     &     \blu{ 1.4801} &     &     \blu{ 1.5402} &     &     \blu{  1.5915} &     &     \blu{  1.6373} &     &     \blu{  1.6780} &     &     \blu{  1.7154} &     \\
                      & \sl      0.9181  & \BK & \sl      0.6286  & \CS & \sl       0.6817  & \CS & \sl       0.7221  & \CS & \sl       0.7550  & \CS & \sl       0.7831  & \CS & \sl        0.8076  & \CS & \sl        0.8294  & \CS & \sl        0.8492  & \CS & \sl        0.8673  & \CS \\
                      &          0.9712  &     &          1.1948  &     &           1.3199  &     &           1.4102  &     &           1.4819  &     &           1.5420  &     &            1.5939  &     &            1.6399  &     &            1.6812  &     &            1.7188  &     \\
                      & \sl      0.9575  & \ZR & \sl      1.1965  & \ZR & \sl       1.3191  & \ZR & \sl       1.4105  & \ZR & \sl       1.4817  & \ZR & \sl       1.5421  & \ZR & \sl        1.5938  & \ZR & \sl        1.6400  & \ZR & \sl        1.6811  & \ZR & \sl        1.7188  & \ZR \\
                      &          1.2376  &     & \na              &     & \na               &     & \na               &     & \na               &     & \na               &     & \na                &     & \na                &     & \na                &     & \na                &     \\
                      & \sl \red{0.9675} & \BK & \sl \red{1.2247} & \KK & \sl \red{ 1.3398} & \KK & \sl \red{ 1.4258} & \KK & \sl \red{ 1.4947} & \KK & \sl \red{ 1.5530} & \KK & \sl \red{  1.6036} & \KK & \sl \red{  1.6485} & \KK & \sl \red{  1.6890} & \KK & \sl \red{  1.7260} & \KK \\ \hline
\multirow{6}{*}{0.5}  &     \blu{0.9692} &     &     \blu{1.5991} &     &     \blu{ 2.0247} &     &     \blu{ 2.3809} &     &     \blu{ 2.6862} &     &     \blu{ 2.9618} &     &     \blu{  3.2118} &     &     \blu{  3.4443} &     &     \blu{  3.6608} &     &     \blu{  3.8654} &     \\
                      & \sl      0.8862  & \CS & \sl      0.8862  & \CS & \sl       1.0854  & \CS & \sl       1.2533  & \CS & \sl       1.4012  & \CS & \sl       1.5349  & \CS & \sl        1.6579  & \CS & \sl        1.7724  & \CS & \sl        1.8799  & \CS & \sl        1.9816  & \CS \\
                      &          0.9908  &     &          1.5977  &     &           2.0306  &     &           2.3862  &     &           2.6954  &     &           2.9725  &     &            3.2259  &     &            3.4608  &     &            3.6808  &     &            3.8883  &     \\
                      & \sl      0.9701  & \ZR & \sl      1.6015  & \ZR & \sl       2.0288  & \ZR & \sl       2.3871  & \ZR & \sl       2.6947  & \ZR & \sl       2.9728  & \ZR & \sl        3.2255  & \ZR & \sl        3.4610  & \ZR & \sl        3.6805  & \ZR & \sl        3.8883  & \ZR \\
                      &          1.0002  &     & \na              &     & \na               &     & \na               &     & \na               &     & \na               &     & \na                &     & \na                &     & \na                &     & \na                &     \\
                      & \sl \red{0.9863} & \BK & \sl \red{1.6598} & \KK & \sl \red{ 2.0777} & \KK & \sl \red{ 2.4274} & \KK & \sl \red{ 2.7314} & \KK & \sl \red{ 3.0055} & \KK & \sl \red{  3.2562} & \KK & \sl \red{  3.4892} & \KK & \sl \red{  3.7074} & \KK & \sl \red{  3.9136} & \KK \\ \hline
\multirow{6}{*}{1}    &          1.1516  &     &          2.7343  &     &           4.2756  &     &           5.8236  &     &           7.3584  &     &           8.8919  &     &           10.4166  &     &           11.9382  &     &           13.4528  &     &           14.9636  &     \\
                      & \sl \blu{1.1577} & \KK & \sl \blu{2.7547} & \KK & \sl \blu{ 4.3168} & \KK & \sl \blu{ 5.8921} & \KK & \sl \blu{ 7.4601} & \KK & \sl \blu{ 9.0328} & \KK & \sl \blu{ 10.6022} & \KK & \sl \blu{ 12.1741} & \KK & \sl \blu{ 13.7441} & \KK & \sl \blu{ 15.3155} & \KK \\
                      &          1.1781  &     &          2.7489  &     &           4.3197  &     &           5.8905  &     &           7.4613  &     &           9.0321  &     &           10.6029  &     &           12.1737  &     &           13.7445  &     &           15.3153  &     \\
                      & \sl      1.1577  & \ZR & \sl      2.7545  & \ZR & \sl       4.3164  & \ZR & \sl       5.8916  & \ZR & \sl       7.4594  & \ZR & \sl       9.0319  & \ZR & \sl       10.6012  & \ZR & \sl       12.1729  & \ZR & \sl       13.7427  & \ZR & \sl       15.3140  & \ZR \\
                      &          1.1608  &     & \na              &     & \na               &     & \na               &     & \na               &     & \na               &     & \na                &     & \na                &     & \na                &     & \na                &     \\
                      & \sl \red{1.1578} & \KK & \sl \red{2.7548} & \KK & \sl \red{ 4.3169} & \KK & \sl \red{ 5.8922} & \KK & \sl \red{ 7.4602} & \KK & \sl \red{ 9.0329} & \KK & \sl \red{ 10.6023} & \KK & \sl \red{ 12.1742} & \KK & \sl \red{ 13.7442} & \KK & \sl \red{ 15.3156} & \KK \\ \hline
\multirow{6}{*}{1.5}  &     \blu{1.5139} &     &     \blu{4.7367} &     &           8.8817  &     &          13.7668  &     &          19.2502  &     &          25.2613  &     &           31.7334  &     &           38.6263  &     &           45.8996  &     &           53.5266  &     \\
                      & \sl      1.3293  & \BK & \sl      4.5721  & \KK & \sl \blu{ 8.9689} & \KK & \sl \blu{14.3024} & \KK & \sl \blu{20.3762} & \KK & \sl \blu{27.1479} & \KK & \sl \blu{ 34.5222} & \KK & \sl \blu{ 42.4772} & \KK & \sl \blu{ 50.9536} & \KK & \sl \blu{ 59.9375} & \KK \\
                      &          1.6114  &     &          5.0545  &     &           9.5970  &     &          15.0171  &     &          21.1905  &     &          28.0344  &     &           35.4886  &     &           43.5067  &     &           52.0514  &     &           61.0922  &     \\
                      & \sl      1.5971  & \ZR & \sl      5.0586  & \ZR & \sl       9.5921  & \ZR & \sl      15.0154  & \ZR & \sl      21.1846  & \ZR & \sl      28.0289  & \ZR & \sl       35.4800  & \ZR & \sl       43.4972  & \ZR & \sl       52.0392  & \ZR & \sl       61.0786  & \ZR \\
                      &     \red{1.5989} &     & \na              &     & \na               &     & \na               &     & \na               &     & \na               &     & \na                &     & \na                &     & \na                &     & \na                &     \\
                      & \sl      1.6224  & \BK & \sl \red{5.5684} & \CS & \sl \red{10.2297} & \CS & \sl \red{15.7497} & \CS & \sl \red{22.0108} & \CS & \sl \red{28.9339} & \CS & \sl \red{ 36.4609} & \CS & \sl \red{ 44.5467} & \CS & \sl \red{ 53.1550} & \CS & \sl \red{ 62.2558} & \CS \\ \hline
\multirow{6}{*}{1.8}  &          1.4483  &     &          5.1149  &     &          10.4447  &     &          17.2231  &     &          25.2907  &     &          34.5448  &     &           44.8969  &     &           56.2813  &     &           68.6385  &     &           81.9210  &     \\
                      & \sl \blu{1.6765} & \BK & \sl \blu{6.1965} & \KK & \sl \blu{13.9088} & \KK & \sl \blu{24.3496} & \KK & \sl \blu{37.2347} & \KK & \sl \blu{52.5393} & \KK & \sl \blu{ 70.1002} & \KK & \sl \blu{ 89.9057} & \KK & \sl \blu{111.8432} & \KK & \sl \blu{135.9060} & \KK \\
                      &          2.0555  &     &          7.5003  &     &          15.8014  &     &          26.7233  &     &          40.1148  &     &          55.8658  &     &           73.8905  &     &           94.1188  &     &          116.4923  &     &          140.9605  &     \\
                      & \sl      2.0481  & \ZR & \sl      7.5007  & \ZR & \sl      15.7948  & \ZR & \sl      26.7156  & \ZR & \sl      40.1012  & \ZR & \sl      55.8481  & \ZR & \sl       73.8661  & \ZR & \sl       94.0884  & \ZR & \sl      116.4541  & \ZR & \sl      140.9145  & \ZR \\
                      &     \red{2.0501} &     & \na              &     & \na               &     & \na               &     & \na               &     & \na               &     & \na                &     & \na                &     & \na                &     & \na                &     \\
                      & \sl      2.0777  & \BK & \sl \red{7.8501} & \CS & \sl \red{16.2868} & \CS & \sl \red{27.3353} & \CS & \sl \red{40.8472} & \CS & \sl \red{56.7138} & \CS & \sl \red{ 74.8501} & \CS & \sl \red{ 95.1871} & \CS & \sl \red{117.6664} & \CS & \sl \red{142.2381} & \CS \\ \hline
\multirow{6}{*}{1.9}  &          1.0353  &     &          3.7704  &     &           7.8734  &     &          13.1989  &     &          19.6379  &     &          27.1159  &     &           35.5691  &     &           44.9481  &     &           55.2082  &     &           66.3127  &     \\
                      & \sl \blu{1.8273} & \BK & \sl \blu{6.8573} & \KK & \sl \blu{16.0993} & \KK & \sl \blu{29.0750} & \KK & \sl \blu{45.5221} & \KK & \sl \blu{65.4737} & \KK & \sl \blu{ 88.7686} & \KK & \sl \blu{115.4333} & \KK & \sl \blu{145.3521} & \KK & \sl \blu{178.5468} & \KK \\
                      &          2.2477  &     &          8.5942  &     &          18.7177  &     &          32.4615  &     &          49.7204  &     &          70.4157  &     &           94.4848  &     &          121.8754  &     &          152.5433  &     &          186.4500  &     \\
                      & \sl      2.2432  & \ZR & \sl      8.5926  & \ZR & \sl      18.7101  & \ZR & \sl      32.4503  & \ZR & \sl      49.7021  & \ZR & \sl      70.3905  & \ZR & \sl       94.4503  & \ZR & \sl      121.8313  & \ZR & \sl      152.4878  & \ZR & \sl      186.3822  & \ZR \\
                      &     \red{2.2455} &     & \na              &     & \na               &     & \na               &     & \na               &     & \na               &     & \na                &     & \na                &     & \na                &     & \na                &     \\
                      & \sl      2.2748  & \BK & \sl \red{8.8021} & \CS & \sl \red{19.0178} & \CS & \sl \red{32.8505} & \CS & \sl \red{50.1962} & \CS & \sl \red{70.9766} & \CS & \sl \red{ 95.1293} & \CS & \sl \red{122.6024} & \CS & \sl \red{153.3517} & \CS & \sl \red{187.3389} & \CS \\ \hline
\multirow{6}{*}{1.99} &          0.1474  &     &          0.5494  &     &           1.1671  &     &           1.9816  &     &           2.9788  &     &           4.1482  &     &            5.4811  &     &            6.9705  &     &            8.6101  &     &           10.3944  &     \\
                      & \sl \blu{1.9816} & \BK & \sl \blu{7.5121} & \KK & \sl \blu{18.3642} & \KK & \sl \blu{34.1070} & \KK & \sl \blu{54.5469} & \KK & \sl \blu{79.8163} & \KK & \sl \blu{109.7856} & \KK & \sl \blu{144.5508} & \KK & \sl \blu{184.0144} & \KK & \sl \blu{228.2517} & \KK \\
                      &          2.4441  &     &          9.7330  &     &          21.8288  &     &          38.7113  &     &          60.3666  &     &          86.7839  &     &          117.9546  &     &          153.8713  &     &          194.5275  &     &          239.9178  &     \\
                      & \sl      2.4427  & \ZR & \sl      9.7293  & \ZR & \sl      21.8200  & \ZR & \sl      38.6960  & \ZR & \sl      60.3426  & \ZR & \sl      86.7495  & \ZR & \sl      117.9077  & \ZR & \sl      153.8100  & \ZR & \sl      194.4500  & \ZR & \sl      239.8220  & \ZR \\
                      &     \red{2.4452} &     & \na              &     & \na               &     & \na               &     & \na               &     & \na               &     & \na                &     & \na                &     & \na                &     & \na                &     \\
                      & \sl      2.4563  & \CS & \sl \red{9.7573} & \CS & \sl \red{21.8651} & \CS & \sl \red{38.7595} & \CS & \sl \red{60.4267} & \CS & \sl \red{86.8560} & \CS & \sl \red{118.0385} & \CS & \sl \red{153.9670} & \CS & \sl \red{194.6351} & \CS & \sl \red{240.0373} & \CS \\ \hline
}
\end{ctable}

It should be pointed out that in many particular cases ($\alpha$ close to $2$ or $n$ large), the bound $\frac{1}{2} (\frac{n \pi}{2})^\alpha \le \lambda_n \le (\frac{n \pi}{2})^\alpha$ of~\cite{bib:cs05, bib:d00} is sharper than the estimates obtained below, unless extremely large matrices are used. Also, good numerical estimates of $\lambda_n$ are available for $\alpha = 1$ due to~\cite{bib:kkms10}, By the monotonicity of $(\lambda_n)^{1/\alpha}$ in $\alpha$, this gives a lower bound for $\lambda_n$ when $\alpha \in (1, 2)$ and an upper bound for $\alpha \in (0, 1)$. Finally, a good estimate of $\lambda_1$ can be found in~\cite{bib:bk04}. For a comparison of the above, see Table~\ref{tab:interval}.

Our method for the lower bound works for fractional Laplace operator in an arbitrary bounded open set $D \sub \R^d$ (in fact, it can be easily extended to more general pseudo-differential operators, or L{\'e}vy processes). Fix $\eps > 0$ and let $\set{I_k : k \in \Z^d}$ be the partition of $\R^d$ into cubes $I_k = \prod_{j = 1}^d [k_j \eps, (k_j + 1)\eps]$, $k \in \Z^d$. Let $K_\eps \sub \Z^d$ be the set of those $k \in \Z^d$ for which $I_k$ intersects $D$, and let $D_\eps$ be the interior of $\bigcup_{k \in K_\eps} I_k$. Note that $D \sub D_\eps$.

The definition of $\A = (-\Delta)^{\alpha/2}$ in higher dimension is similar to~\eqref{eq:lap}: for smooth bounded functions we have
\formula{
 \A f(x) & = c_{d,\alpha} \pv\!\!\int_{\R^d} \frac{f(x) - f(y)}{|x - y|^{d + \alpha}} \, dy , && x \in \R^d ,
}
where $c_{d,\alpha} = 2^\alpha \Gamma((d + \alpha) / 2) / (\pi^{d/2} |\Gamma(-\frac{\alpha}{2})|)$. Fractional Laplace operator in $D$ with zero exterior condition, denoted $\A_D$, is defined as in dimension one. Below we denote by $\lambda_n$ the eigenvalues of $\A_D$. By domain monotonicity of $\lambda_n$, the eigenvalues for $D$ are not less than than the eigenvalues of its superset $D_\eps$. For notational convenience, we assume that $D = D_\eps$.

The Dirichlet form $\E(f, f)$ corresponding to $\A_D$ is given by
\formula{
 \E(f, f) & = \frac{c_{d,\alpha}}{2} \int_{\R^d} \int_{\R^d} \frac{(f(x) - f(y))^2}{|x - y|^{d + \alpha}} \, dx dy , && f \in L^2(D) .
}
As usual, $f \in L^2(D)$ is extended to $\R^d$ so that $f(x) = 0$ for $x \in \R^d \setminus D$. For $k \in \Z^d$, denote
\formula{
 \|k\| & = \sqrt{\sum_{j = 1}^d (|k_j| + 1)^2} .
}
When $x \in I_k$, $y \in I_l$, $k, l \in \Z^d$, we have $|x - y| \le \eps \|k - l\|$. We define
\formula{
 \nu_k & = \|k\|^{-d - \alpha} \, , & \bar{\nu} & = \sum_{k \in \Z^d} \nu_k ,
}
and
\formula{
 \E_\eps(f, f) & = \frac{c_{d,\alpha} \eps^{-d - \alpha}}{2} \sum_{k, l \in \Z^d} \nu_{k - l} \int_{I_k} \int_{I_l} (f(x) - f(y))^2 dx dy .
}
Clearly, $\E_\eps(f, f) \le \E(f, f)$. By Rayleigh-Ritz variational principle, the eigenvalues $\lambda_n$ are bounded below by the sequence $\lambda_{n,\eps}$ of eigenvalues of the operator corresponding to the Dirichlet form $\E_\eps$. More precisely, $\lambda_{n,\eps}$ are defined in the usual way,
\formula{
 \lambda_{n,\eps} & = \inf \set{\sup \set{\E_\eps(f, f) : f \in L, \, \|f\|_2 = 1} : L < L^2(D), \, \dim L = n} .
}
Here `$L < L^2(D)$' means that $L$ is a linear subspace of $L^2(D)$.

For $f \in L^2(D)$ and $k \in \Z^d$, let $f_k = \eps^{-d} \int_{I_k} f(x) dx$, and define $f^*$ to be equal to $f_k$ on $I_k$. Hence $f^* \in L^2(D)$ is the orthogonal projection of $f$ onto the space of functions constant on each $I_k$, and $\int_{I_k} f^*(x) dx = \int_{I_k} f(x) dx$. In particular, $\|f\|_2^2 = \|f^*\|_2^2 + \|f - f^*\|_2^2$. Furthermore,
\formula{
 \E_\eps(f, f) & = \frac{c_{d,\alpha} \eps^{-d - \alpha}}{2} \sum_{k, l \in \Z^d} \nu_{k - l} \int_{I_k} \int_{I_l} \expr{(f(x))^2 - 2 f(x) f(y) + (f(y))^2} dx dy \\
 & = c_{d,\alpha} \eps^{-\alpha} \expr{\bar{\nu} ||f||_2^2 - \eps^d \sum_{k, l \in \Z^d} \nu_{k - l} f_k f_l} .
}
Comparing this with a similar formula for $f^*$, we obtain that
\formula{
 \E_\eps(f, f) & = \E_\eps(f^*, f^*) + c_{d,\alpha} \eps^{-\alpha} \bar{\nu} ||f - f^*||_2^2
}
This shows that the two orthogonal subspaces, $\{ f \in L^2(D) : f^* = 0 \}$ and $\{ f \in L^2(D) : f^* = f \}$, are invariant under the action of the operator corresponding to $\E_\eps$. The former subspace is in fact its eigenspace, corresponding to the eigenvalue $c_{d,\alpha} \eps^{-\alpha}$. The latter one is finite-dimensional, and the action of $\E_\eps$ in the basis of normalized indicators of $I_k$, $k \in K_\eps$, is given by the following matrix $V$: if $\kappa$ be a bijection between $\{1, 2, ..., |K_\eps|\}$ and $K_\eps$, then $V_{p,q} = c_{d,\alpha} \eps^{-\alpha} (\delta_{p,q} \nu^* - \nu_{\kappa(p) - \kappa(q)})$.

We conclude that the sequence $\lambda_{n,\eps}$ starts with those eigenvalues of the matrix $V$ which are less than $c_{d,\alpha} \eps^{-\alpha} \bar{\nu}$, which are followed by the constant $c_{d,\alpha} \eps^{-\alpha} \bar{\nu}$. This gives the lower bound for the eigenvalues $\lambda_n$ for an arbitrary open bounded set $D$. Note that replacing $\bar{\nu}$ be a smaller number gives smaller lower bounds $\lambda_{n,\eps}$, hence the series defining $\bar{\nu}$ should be approximated from below.

When $D = (-1, 1) \sub \R$ and $\eps = \frac{2}{N}$, then $\bar{\nu} = 2 \zeta(1 + \alpha) - 1$, where $\zeta$ is the Riemann zeta function. Furthermore, in this case $V$ is a Toeplitz matrix with the symbol
\formula{
 \frac{2 c_{d,\alpha}}{\eps^\alpha} \expr{\zeta(1 + \alpha) - \sum_{k = 0}^\infty \frac{\cos(k x)}{(1 + k)^{1 + \alpha}}} & = \frac{2 c_{d,\alpha}}{\eps^\alpha} \expr{\zeta(1 + \alpha) - \real \expr{\frac{\li_{1+\alpha}(e^{i x})}{e^{i x}}}} \\
 & \hspace*{-4em} = \frac{2 c_{d,\alpha}}{\eps^\alpha} \expr{\zeta(1 + \alpha) - \frac{1}{1 + \alpha} \int_0^\infty \frac{t^\alpha (e^t - \cos x)}{e^{2t} - 2 e^t \cos x + 1} \, dt}.
}
The right hand side is easily checked to be increasing in $x \in [0, \pi]$, and so it attains its maximum for $x = \pi$. The symbol of $V$, and hence the eigenvalues of $V$, are therefore bounded above by $2 c_{d,\alpha} \eps^{-\alpha} (\zeta(1 + \alpha) - \li_{1+\alpha}(-1)) = 2^{1-\alpha} c_{d,\alpha} \eps^{-\alpha} \zeta(1 + \alpha) \le c_{d,\alpha} \eps^{-\alpha} \bar{\nu}$. It follows that all $N$ eigenvalues of $V$ are included in the sequence $\lambda_{n,\eps}$.

In higher dimensions, $\bar{\nu}$ can only be computed by approximating numerically a $d$-dimensional infinite series. We summarize the results of this section in the following two results.

\begin{proposition}
Let $D = (-1, 1)$, $N > 0$ and $\eps = 2 / N$. Let $V$ be a $N \times N$ Toeplitz matrix with entries
\formula{
 V_{p,q} & = -\frac{c_\alpha}{\eps^\alpha} \, \frac{1}{(|p - q| + 1)^{d + \alpha}} \, , && p, q = 1, 2, ..., N, \; p \ne q; \\
 V_{p,p} & = \frac{2 c_\alpha (\zeta(1 + \alpha) - 1)}{\eps^\alpha} \, , && p = 1, 2, ..., N .
}
Define $\lambda_{n,\eps}$ to be the $n$-th smallest eigenvalue of $V$ when $n \le N$, and $\lambda_{n,\eps} = c_\alpha \eps^{-\alpha} (2 \zeta(1 + \alpha) - 1)$ otherwise. Then the eigenvalues $\lambda_n$ of $\A_D$ satisfy $\lambda_n \ge \lambda_{n,\eps}$. 
\end{proposition}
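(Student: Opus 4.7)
The plan is to verify that the proposition is exactly the content of the construction developed in this section, specialized to $d=1$, $D=(-1,1)$ and $\eps=2/N$. First I would set $\kappa$ to be the bijection from $\{1,\dots,N\}$ onto $K_\eps = \{-N/2,\dots,N/2-1\}$ sending $p$ to $p-1-N/2$, so that the cubes $I_{\kappa(p)}$ tile $D$ exactly. With this identification the general matrix $V_{p,q} = c_{d,\alpha}\eps^{-\alpha}(\delta_{p,q}\bar{\nu} - \nu_{\kappa(p)-\kappa(q)})$ from the preceding paragraphs becomes Toeplitz in the index $p-q$, since $\nu_k = (|k|+1)^{-1-\alpha}$ depends only on $|k|$. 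This gives the off-diagonal entries $V_{p,q} = -c_\alpha \eps^{-\alpha}(|p-q|+1)^{-1-\alpha}$ stated in the proposition.

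Next I would compute $\bar{\nu}$. For $d=1$,
\formula{
 \bar{\nu} & = \sum_{k \in \Z} \frac{1}{(|k|+1)^{1+\alpha}} = 1 + 2 \sum_{k=1}^\infty \frac{1}{(k+1)^{1+\alpha}} = 2\zeta(1+\alpha) - 1 ,
}
which yields the diagonal $V_{p,p} = c_\alpha \eps^{-\alpha}(\bar{\nu} - 1) = 2 c_\alpha \eps^{-\alpha}(\zeta(1+\alpha) - 1)$. So the matrix in the proposition coincides with the operator induced by $\E_\eps$ on the finite-dimensional invariant subspace of functions constant on each $I_k$. Combined with the fact that the orthogonal invariant subspace $\{f \in L^2(D) : f^* = 0\}$ is an eigenspace for the single eigenvalue $c_\alpha \eps^{-\alpha}\bar{\nu}$, the sequence of eigenvalues of the form $\E_\eps$ is obtained by interlacing the spectrum of $V$ with copies of this constant.

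The remaining issue is to show that all $N$ eigenvalues of $V$ actually appear at the beginning of the sequence $\lambda_{n,\eps}$, i.e.\ are all bounded above by $c_\alpha \eps^{-\alpha}\bar{\nu}$. For this I would use the Toeplitz symbol of $V$ computed in the preceding paragraph, monotone on $[0,\pi]$ with maximum at $x=\pi$ equal to $2^{1-\alpha} c_\alpha \eps^{-\alpha}\zeta(1+\alpha)$, and verify the elementary inequality $2^{1-\alpha}\zeta(1+\alpha) \le 2\zeta(1+\alpha) - 1$, equivalently $2(1-2^{-\alpha})\zeta(1+\alpha) \ge 1$, which holds for all $\alpha \in (0,2)$ (the left side equals $-2\li_{1+\alpha}(-1) \ge 2\ln 2 > 1$). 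This confirms $\sigma(x) \le c_\alpha \eps^{-\alpha}\bar{\nu}$ on $[0,\pi]$, hence all eigenvalues of the Hermitian Toeplitz matrix $V$ lie in that range.

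Finally, since $\E_\eps(f,f) \le \E(f,f)$ on $L^2(D)$, the Rayleigh--Ritz min-max principle gives $\lambda_n \ge \lambda_{n,\eps}$ for every $n$, proving the proposition. I expect the only non-routine part to be the bound on the Toeplitz symbol; the rest is a direct translation of the general construction to the one-dimensional case.
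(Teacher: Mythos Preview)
Your proposal is correct and follows essentially the same route as the paper: the proposition is simply the specialization to $d=1$, $D=(-1,1)$, $\eps=2/N$ of the general construction developed in the preceding paragraphs, and you carry out that specialization exactly as intended, including the computation $\bar{\nu}=2\zeta(1+\alpha)-1$, the identification of the matrix $V$, and the Toeplitz-symbol argument showing all eigenvalues of $V$ lie below $c_\alpha\eps^{-\alpha}\bar{\nu}$. Your added verification of the inequality $2^{1-\alpha}\zeta(1+\alpha)\le 2\zeta(1+\alpha)-1$ via the Dirichlet eta function is a welcome detail that the paper leaves to the reader.
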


\begin{proposition}
Let $D \sub \R^d$ be an open set in $\R^d$, and let $\eps > 0$. Let $K_\eps$ be the set of those $k \in \Z^d$ for which $D \cap \prod_{j = 1}^d [k_j \eps, (k_j + 1) \eps]$ is nonempty, and let $\kappa : \{1, 2, ..., |K_\eps|\} \to K_\eps$ be the enumeration of elements of $K_\eps$. Finally, let
\formula{
 \bar{\nu} & = \sum_{k \in \Z^d} \|k\|^{-d - \alpha} , && \text{where} & \|k\| & = \sqrt{\sum_{j = 1}^d (|k_j| + 1)^2} .
}
Define a $|K_\eps| \times |K_\eps|$ matrix $V$ with entries
\formula{
 V_{p,q} & = -\frac{c_{d,\alpha}}{\eps^\alpha} \, \|\kappa(p) - \kappa(q)\|^{-d - \alpha} \, , && p, q = 1, 2, ..., |K_\eps|, \; p \ne q; \\
 V_{p,p} & = \frac{c_{d,\alpha}}{\eps^\alpha} \, \expr{\bar{\nu} - d^{-(d + \alpha)/2}} , && p = 1, 2, ..., N .
}
Let $\lambda_{n,\eps}$ be the $n$-th smallest eigenvalue of $V$ if $n \le N$ and this eigenvalue does not exceed $c_{d,\alpha} \eps^{-\alpha} \bar{\nu}$, and $\lambda_{n,\eps} = c_{d,\alpha} \eps^{-\alpha} \bar{\nu}$ otherwise. Then the eigenvalues $\lambda_n$ of $\A_D$ satisfy $\lambda_n \ge \lambda_{n,\eps}$.
\end{proposition}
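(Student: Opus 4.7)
The plan is to formalize the argument already sketched in the body of the section, with essentially no new ideas beyond what was done there. The starting point is the pointwise bound $|x - y| \le \eps \|k - l\|$ whenever $x \in I_k$ and $y \in I_l$, which directly yields $\E_\eps(f,f) \le \E(f,f)$ for every $f \in L^2(D)$. By the Rayleigh--Ritz variational characterization, the $n$-th eigenvalue $\lambda_{n,\eps}$ of the (positive, self-adjoint) operator associated with the quadratic form $\E_\eps$ is bounded above by the $n$-th eigenvalue $\lambda_n$ of $\A_D$. Thus the whole task reduces to identifying the spectrum of this auxiliary operator explicitly and showing that it coincides with the sequence $\lambda_{n,\eps}$ defined in the proposition.

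Next, I would split $L^2(D) = H_0 \oplus H_1$, where $H_1$ is the finite-dimensional span of $\{\eps^{-d/2} \ind_{I_{\kappa(p)}} : p = 1, \ldots, |K_\eps|\}$ (functions constant on each cube of the partition) and $H_0$ is its orthogonal complement. For $f \in L^2(D)$, write $f^* = \sum_{k \in K_\eps} f_k \ind_{I_k}$ as in the paper, so that $f - f^* \in H_0$ and $f^* \in H_1$. The identity
\formula{
 \E_\eps(f,f) & = \E_\eps(f^*, f^*) + c_{d,\alpha} \eps^{-\alpha} \bar{\nu} \, \|f - f^*\|_2^2 ,
}
already derived in the section, shows that $H_0$ and $H_1$ are invariant subspaces, and that the operator acts on $H_0$ as multiplication by $c_{d,\alpha} \eps^{-\alpha} \bar{\nu}$. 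On $H_1$, a direct computation of $\E_\eps(\ind_{I_{\kappa(p)}}, \ind_{I_{\kappa(q)}})$ from the definition produces the matrix $c_{d,\alpha} \eps^{-\alpha}(\delta_{p,q} \bar{\nu} - \|\kappa(p) - \kappa(q)\|^{-d-\alpha})$; the diagonal entry matches the expression $c_{d,\alpha} \eps^{-\alpha}(\bar{\nu} - d^{-(d+\alpha)/2})$ stated in the proposition, since the $k = 0$ term of $\bar{\nu}$ is $\|0\|^{-d-\alpha} = d^{-(d+\alpha)/2}$. Hence the operator restricted to $H_1$ is represented by the matrix $V$.

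To finish, I would combine the two pieces of spectrum into a single nondecreasing sequence: the $|K_\eps|$ eigenvalues of $V$ together with the value $c_{d,\alpha} \eps^{-\alpha} \bar{\nu}$ of infinite multiplicity coming from $H_0$. Reading off the $n$-th term of this combined sequence gives exactly the quantity $\lambda_{n,\eps}$ defined in the proposition: it is the $n$-th smallest eigenvalue of $V$ whenever this does not exceed $c_{d,\alpha} \eps^{-\alpha} \bar{\nu}$, and equals $c_{d,\alpha} \eps^{-\alpha} \bar{\nu}$ otherwise. Combining with $\lambda_{n,\eps} \le \lambda_n$ finishes the proof.

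I do not expect a genuine obstacle here, since the proposition is a clean packaging of the calculations done before its statement. The one point worth flagging is why the statement is formulated with the clipping at $c_{d,\alpha} \eps^{-\alpha} \bar{\nu}$: in the one-dimensional case the Toeplitz symbol argument given earlier shows that every eigenvalue of $V$ lies below this threshold, but in general dimension no such argument is available, so some eigenvalues of $V$ may overshoot $c_{d,\alpha} \eps^{-\alpha} \bar{\nu}$ and must be replaced by it in the ordered list. This is handled automatically by the way $\lambda_{n,\eps}$ is defined.
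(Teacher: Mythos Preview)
Your proposal is correct and follows the paper's own argument essentially verbatim; the proposition is indeed just a packaging of the computations already carried out in the section, and you have identified all the pieces (the form comparison $\E_\eps \le \E$, the orthogonal splitting into piecewise-constant functions and their complement, the identification of the matrix $V$, and the merging of the two parts of the spectrum). One small point you glossed over: the indicator functions $\ind_{I_{\kappa(p)}}$ lie in $L^2(D_\eps)$, not in $L^2(D)$, so the spectral analysis of $\E_\eps$ is really carried out on $L^2(D_\eps)$; the paper bridges this with the domain-monotonicity step $\lambda_n(\A_D) \ge \lambda_n(\A_{D_\eps})$ and then assumes $D = D_\eps$, which you should make explicit as well.
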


The lower bounds $\lambda_{n,\eps}$ for the interval $D = (-1, 1)$ are presented in Table~\ref{tab:interval} above. In higher dimensions, the complexity of computations increases dramatically. For example, a unit disk $B(0, 1)$ or a square $[-1, 1]^2$ with $\eps = \frac{1}{25}$ require handling matrices larger than $2000 \times 2000$. Some results for these two cases are given in Tables~\ref{tab:square} and~\ref{tab:disk}.

\begin{ctable}%
[label=tab:square,botcap,notespar,doinside={\scriptsize},caption={Comparison of estimates of $\lambda_n$ for a square $[-1, 1]^2$. LB and UB mean lower bounds and upper bounds respectively. Estimates of this section are given in roman font, best numerical estimates known before are typeset in slanted font. Better estimates are printed in color.}]%
{|l|*{2}{@{\hspace{0.5em}}rr@{}l@{\hspace{0.5em}}|@{\hspace{0.5em}}r@{}l@{\hspace{0.5em}}|}}%
{
\tnote[1]{See~\cite{bib:cs05}.}
}%
{
\hline
\multicolumn{1}{|c|@{\hspace{0.5em}}}{$\alpha$} & 
\multicolumn{3}{c|@{\hspace{0.5em}}}{$\lambda_1$ (LB)} &
\multicolumn{2}{c|@{\hspace{0.5em}}}{$\lambda_1$ (UB)} &
\multicolumn{3}{c|@{\hspace{0.5em}}}{$\lambda_2$ (LB)} &
\multicolumn{2}{c|}{$\lambda_2$ (UB)} \\
\hline
0.1 & \blu{1.0308} & \sl      0.5230  & \BK & \sl \red{1.0462} & \BK & \blu{1.0880} & \sl      0.5415  & \BK & \sl \red{1.0831} & \BK \\
0.2 & \blu{1.0506} & \sl      0.5472  & \BK & \sl \red{1.0946} & \BK & \blu{1.1691} & \sl      0.5865  & \BK & \sl \red{1.1731} & \BK \\
0.5 & \blu{1.1587} & \sl      0.6266  & \BK & \sl \red{1.2534} & \BK & \blu{1.4908} & \sl      0.7452  & \BK & \sl \red{1.4905} & \BK \\
1   & \blu{1.3844} & \sl      0.7853  & \BK & \sl \red{1.5708} & \BK & \blu{2.1807} & \sl      1.1107  & \BK & \sl \red{2.2215} & \BK \\
1.5 & \blu{1.4135} & \sl      0.9843  & \BK & \sl \red{1.9688} & \BK & \blu{2.6029} & \sl      1.6554  & \BK & \sl \red{3.3110} & \BK \\
1.8 &      0.9167  & \sl \blu{1.1271} & \BK & \sl \red{2.2544} & \BK &      1.8164  & \sl \blu{2.1033} & \BK & \sl \red{4.2068} & \BK \\
1.9 &      0.5427  & \sl \blu{1.1792} & \BK & \sl \red{2.3585} & \BK &      1.0984  & \sl \blu{2.2781} & \BK & \sl \red{4.5563} & \BK \\
\hline
}
\end{ctable}

\begin{ctable}%
[label=tab:disk,botcap,notespar,doinside={\scriptsize},caption={Comparison of estimates of $\lambda_n$ for a unit disk. LB and UB mean lower bounds and upper bounds respectively. Estimates of this section are given in roman font, best numerical estimates known before are typeset in slanted font. Better estimates are printed in color.}]%
{|l|*{3}{@{\hspace{0.5em}}rr@{}l@{\hspace{0.5em}}|}@{\hspace{0.5em}}r@{}l@{\hspace{0.5em}}|}%
{
\tnote[1]{See~\cite{bib:bk04}.}
\tnote[2]{See~\cite{bib:cs05}.}
}%
{
\hline
\multicolumn{1}{|c|@{\hspace{0.5em}}}{$\alpha$} & 
\multicolumn{3}{c|@{\hspace{0.5em}}}{$\lambda_1$ (LB)} &
\multicolumn{3}{c|@{\hspace{0.5em}}}{$\lambda_1$ (UB)} &
\multicolumn{3}{c|@{\hspace{0.5em}}}{$\lambda_2$ (LB)} &
\multicolumn{2}{c|}{$\lambda_2$ (UB)} \\
\hline
0.1 & \blu{1.0381} & \sl      1.0157  & \BK & 6.6198 & \sl \red{1.0641} & \BK & \blu{1.0953} & \sl      0.5718  & \CS & \sl \red{ 1.1609} & \CS \\
0.2 & \blu{1.0655} & \sl      1.0396  & \BK & 3.8878 & \sl \red{1.1342} & \BK & \blu{1.1849} & \sl      0.6541  & \CS & \sl \red{ 1.3476} & \CS \\
0.5 & \blu{1.1986} & \sl      1.1618  & \BK & 2.5081 & \sl \red{1.3943} & \BK & \blu{1.5404} & \sl      0.9787  & \CS & \sl \red{ 2.1079} & \CS \\
1   &      1.4734  & \sl \blu{1.5707} & \BK & 2.7588 & \sl \red{2.0944} & \BK & \blu{2.3201} & \sl      1.9158  & \CS & \sl \red{ 4.4429} & \CS \\
1.5 &      1.5387  & \sl \blu{2.3891} & \BK & 4.0668 & \sl \red{3.4131} & \BK &      2.8379  & \sl \blu{3.7502} & \CS & \sl \red{ 9.3648} & \CS \\
1.8 &      1.0087  & \sl \blu{3.2210} & \BK & 5.5014 & \sl \red{4.7468} & \BK &      2.0045  & \sl \blu{5.6114} & \CS & \sl \red{14.6487} & \CS \\
1.9 &      0.5990  & \sl \blu{3.5834} & \BK & 6.1369 & \sl \red{5.2974} & \CS &      1.2165  & \sl \blu{6.4182} & \CS & \sl \red{17.0045} & \CS \\\hline
}
\end{ctable}

\bigskip

In principle, the upper bound is much more difficult. The above approach can be modified to give an upper bound for $\lambda_1$ whenever the Green function for $D$ can be computed. For the fractional Laplace operator, this is the case when $D$ is a ball. By a scaling property, it is enough to consider $D = B(0, 1)$.

Let $G_D$ be the Green function of $D$, $G_D(x, y) = \int_0^\infty p^D_t(x, y) dt$, where $p^D_t$ is the heat kernel for $\A_D$ (see the proof of Proposition~\ref{prop:uniform}). The Green function is the kernel of $\A_D^{-1}$. M.~Riesz proved that
\formula{
 G_D(x, y) & = \frac{\Gamma(\frac{d}{2}) |x - y|^{\alpha - d}}{2^\alpha \pi^{d/2} (\Gamma(\frac{\alpha}{2}))^2} \int_0^{\frac{(1 - x^2)(1 - y^2)}{|x - y|^2}} \frac{s^{\alpha/2 - 1}}{(1 + s)^{d/2}} \, ds \\
 & = \frac{\Gamma(\frac{d}{2}) (1 - x^2)^{\alpha / 2} (1 - y^2)^{\alpha / 2}}{2^\alpha \pi^{d/2} \Gamma(\frac{\alpha}{2}) \Gamma(1 + \frac{\alpha}{2}) |x - y|^{d}} \, {_2 F_1}\expr{\frac{\alpha}{2}, \, \frac{d}{2}; \, 1 + \frac{\alpha}{2}; \, -\frac{(1 - x^2)(1 - y^2)}{|x - y|^2}} .
}
Since the eigenvalues of $\A_D^{-1}$ are $\lambda_n^{-1}$, we have
\formula{
 \frac{1}{\lambda_1} & = \sup \set{\int_D \int_D G_D(x, y) f(x) f(y) dx dy : f \in L^2(D) , \, \|f\|_2 = 1} .
}
Since $G_D(x, y)$ is nonnegative, we may restrict the supremum to nonnegative functions only. Hence, whenever $g(x, y) \le G_D(x, y)$, we have
\formula{
 \lambda_1 & \le \expr{\sup \set{\int_D \int_D g(x, y) f(x) f(y) dx dy : f \in L^2(D) , \, \|f\|_2 = 1}}^{-1} .
}
For $k, l \in \Z^d$, let $g_{k,l}$ be the infimum of $G_D(u, v)$ over $u \in I_k$ and $v \in I_l$. When $x \in I_k$, $y \in I_l$, we choose $g(x, y) = g_{k,l}$. Hence, $\lambda_1$ is bounded above by $\lambda_{1,\eps}^*$, the reciprocal of the largest eigenvalue of the matrix $U$ with entries $U_{i,j} = \eps^d g_{\kappa(i), \kappa(j)}$.

The results for $D = (-1, 1) \sub \R$ and some values of $\alpha$ are given in Table~\ref{tab:interval}. Estimates for the unit disk and the square $[-1, 1]^2$ are given in Tables~\ref{tab:square} and~\ref{tab:disk}. Noteworthy, for the unit disk and $\eps = \frac{1}{25}$, the estimate $\lambda_{1,\eps}^*$ is worse than the one obtained in~\cite{bib:bk04} using analytical methods.

%
%

\subsection*{Acknowledgments} I would like to thank Krzysztof Bogdan and Tadeusz Kulczycki for helpful discussion and valuable suggestions.

%
%

%
%

\end{document}